
\documentclass[12pt, twoside]{article}
\usepackage{amsmath,amsthm,amssymb}
\usepackage{times}
\usepackage{enumerate}

\pagestyle{myheadings}
\def\titlerunning#1{\gdef\titrun{#1}}
\makeatletter
\def\author#1{\gdef\autrun{\def\and{\unskip, }#1}\gdef\@author{#1}}
\def\address#1{{\def\and{\\\hspace*{18pt}}\renewcommand{\thefootnote}{}%
\footnote {#1}}%
\markboth{\autrun}{\titrun}}
\makeatother
\def\email#1{e-mail: #1}
\def\subjclass#1{{\renewcommand{\thefootnote}{}%
\footnote{\emph{Mathematics Subject Classification (2010):} #1}}}
\def\keywords#1{\par\medskip
\noindent\textbf{Keywords.} #1}


\newtheorem{theorem}{Theorem}[section]
\newtheorem{definition}[theorem]{Definition}
\newtheorem{remark}[theorem]{Remark}

\newtheorem{proposition}[theorem]{Proposition}
\newtheorem{lemma}[theorem]{Lemma}
\newtheorem{corollary}[theorem]{Corollary}



\theoremstyle{definition}



\numberwithin{equation}{section}

\frenchspacing

\textwidth=15cm
\textheight=23cm
\parindent=16pt
\oddsidemargin=-0.5cm
\evensidemargin=-0.5cm
\topmargin=-0.5cm




\begin{document}


\baselineskip=17pt


\titlerunning{Inverses of generators}

\title{Inverses of generators of integrated fractional resolvent operator functions}

\author{Miao Li
\and
Javier Pastor
\and
Sergey Piskarev}

\date{\today}

\maketitle

\address{M. Li: Department of Mathematics, Sichuan University, Chengdu 610064, China; \email{mli@scu.edu.cn}
\and
J. Pastor: Department of Mathematics, Universitat de Val\`{e}ncia, Dr. Moliner 50, 46100 Burjassot, Valencia, Spain; \email{e-mail: pastorv@uv.es }
\and
S. Piskarev: Scientific Research Computer Center, Lomonosov Moscow State University, Vorobjevy Gory, Moscow 119899, Russia; \email: {piskarev@gmail.com}}

\subjclass{Primary 34A08, 47D06, 47D09, 47D62, 47D99; Secondary 26A33, 45N05}


\begin{abstract}
This paper is devoted to the inverse generator problem in the setting of generators of integrated resolvent operator functions. It is shown that if the operator $A$ is the generator of a tempered $\beta$-times integrated $\alpha$-resolvent operator function ($(\alpha,\beta)$-ROF) and it is injective, then the inverse operator $A^{-1}$ is the generator of a tempered $(\alpha,\gamma)$-ROF for all $\gamma > \beta+1/2,$ by means of an explicit representation of the integrated resolvent operator function based in Bessel functions of first kind.  Analytic resolvent operator functions are also considered, showing that $A^{-1}$ is in addition the generator of a tempered $(\delta,0)$-ROF for all $\delta<\alpha.$ Moreover, the optimal decay rate of $(\alpha,\beta)$-ROFs  as $t\to \infty$ is given. These result are applied to fractional Cauchy problem unsolved in the fractional derivative.

\keywords{Inverse operator, $\alpha$-times resolvent families, integrated resolvent operator function, integrated semigroups, integrated cosine operator functions, generators, abstract fractional Cauchy problem, well-posedness}
\end{abstract}

\section{Introduction}

\setcounter{section}{1}
\setcounter{equation}{0}
\setcounter{theorem}{0}

R. deLaubenfels posed the following question (adapted version) at the end of the paper \cite{dL1}:\\

\textsl{Suppose the operator $A$ on a Banach space $X$ generates a bounded $C_0$-semigroup and has dense range. Does $A^{-1}$ generate a bounded $C_0$-semigroup or at least generate a $C_0$-semigroup?} \\

It is shown in \cite{dL1} that the answer is positive for generators of bounded analytic $C_0$-semigroup by means of an explicit representation of the semigroup generated by $A^{-1}.$ In fact this follows from the nice property that the class of sectorial operators is stable under inversion. For other classes of semigroups, as multiplication semigroups or contraction semigroups in a Hilbert space, the answer is positive again since the condition which characterize to be the generator of that sort of semigroup is easily shown to be stable under inversion.
When the semigroup generated by $A$ is exponentially stable, then $A^{-1}$ is a bounded operator and accordingly it generates a  $C_0$-semigroup, but in general it is not uniformly bounded. This situation is analysed by an explicit representation of the semigroup generated by $A^{-1}$ in \cite{Z1,Z2}, including growth estimates. In \cite{G} a sufficient condition on the resolvent map of $A$ under which $A^{-1}$ is the generator of a bounded  $C_0$-semigroup is provided. Several equivalences for $A^{-1}$ generating a $C_0$-semigroup are given in \cite{dL3}.

In general the answer to the question above is negative. H. Komatsu gives in \cite{K} an example of a infinitesimal generator of a contraction semigroup on $c_0$ with dense range, for which the inverse operator is not an infinitesimal generator (see also \cite[\S1.3.6]{MS}). In \cite{GZT} it is shown that the answer is negative in general for $ X=\ell^p$, $ 1<p<\infty,$ $p\ne 2$. Recently a negative answer to the inverse generator problem in $ X=L^p(\mathbb{R})$, $ 1<p<\infty,$ $p\ne 2$, by means of the generator of the shift group, is given in \cite{F}. For other counterexamples see \cite{dL1,dL3,Z1}.

When $ X$ is a Hilbert space the question is still open, although there is a strong negative step in this direction. It is proved in \cite{GZT} that there exist $C_0$-semigroups $ \{e^{tA}\}_{t\ge0}$, of arbitrarily slow growth at infinity such that the densely defined operator $ A^{-1}$ is not the generator of a $C_0$-semigroup.

In the general case, it has been shown in \cite{PZ} that $A^{-1}$ is the generator of a once-integrated semigroup whenever $A$ is injective and generates a bounded $C_0$-semigroup ($A^{-1}$ is the generator of a regularized semigroup, see \cite{dL3}).

As a further motivation,  we mention that there is an interesting relationship between both $A$ and $A^{-1}$ generating bounded $C_0$-semigroups and the power boundedness of the Cayley transform of $A$ (\cite{PZ}). In addition, in \cite{Z1} we can find several applications of the posed problem to systems theory and numerical analysis.

The so called fractional calculus provides an excellent tool for the description of memory and hereditary properties of some materials and physical processes. During the last decades notable contributions have been made to the development of the theory of fractional calculus (\cite{KST, MSK}). The use of fractional order derivatives instead of integer order ones in different fields as viscoelasticity, heat conduction
in materials with memory or in electrodynamics with memory, has developed the interest of researchers in the study of fractional differential equations.

E. Bajlekova has studied systematically fractional evolution equations with Caputo fractional derivatives in the fundamental work \cite{B}, by considering them as equivalent abstract Volterra equations. In a natural way the notion of resolvent family, or operator solution, and its generator arises in the mentioned study (see also \cite{P}). The more general framework of integrated fractional resolvent operator function, which includes integrated semigroups and integrated cosine operator functions, is an appropriate tool to deal with fractional Cauchy problems (\cite{AM, CL,KLW, LS}).

Our aim in this paper is to study the inverse generator problem for the class of generators of integrated fractional resolvent families. The main result of this paper is obtained by Laplace transform techniques and can be summarize in the following terms: if $A$ is the generator of a tempered $\beta$-times integrated $\alpha$-resolvent operator function and is injective, then the inverse operator $A^{-1}$ is the generator of a tempered $\gamma$-times integrated $\alpha$-resolvent operator function for $\gamma > \beta+1/2,$ and it is also the generator of a tempered $\delta$-times resolvent operator function for $\delta<\alpha.$ Explicit representation of the fractional resolvent operator functions generated by $A^{-1},$ in terms of the fractional resolvent operator functions generated by $A$ and Wright functions, are provided. On the other hand, it was shown in \cite{LZ} that an $(\alpha,0)$-ROF can not decay exponentially for $\alpha \in (0,1)$. We will show that none of $(\alpha,\beta)$-ROF is exponentially stable except $\alpha=1$ and $\beta=0$. And the optimal decay rate of $(\alpha,\beta)$-ROF  as $t\to \infty$ is given.

The plan of the paper is as follows. We start by recalling some definitions from fractional calculus, and collecting some of the main properties of certain special functions and of sectorial operators (Section 2). In Section 3 we remind the basic concepts regarding resolvent families. Some results of possible independent interest regarding integrated resolvent families are obtained. We pay special attention to the tempered integrated resolvent families. The main results of this paper are presented in Section 4. Finally, we apply our results to certain fractional Cauchy problems unsolved in the fractional derivative on the spaces $L^p(\mathbb{R}^n)$ and $C_0(\mathbb{R}^n).$

Throughout this paper, $(X, \|\cdot\|)$ denotes a complex Banach space and $A:D(A)\subseteq X \rightarrow X$ stands for a closed linear operator on $X$. As usual, $D(A), $ $R(A),$ $\rho (A)$ and $\sigma(A)$ denote the domain, the range,  the resolvent set  and the spectrum set of $A,$ respectively. Moreover, we denote by $B(X)$ the space of all bounded linear operators on $X.$ We will denote open sectors by $\Sigma _{\theta }:=\{z\in \mathbb{C}:z\neq 0 \text{ and} \left\vert \arg (z)\right\vert <\theta \}$ for $0<\theta \leq \pi ,$ and the line by $\Sigma _{0}:=(0,\infty)$.

\setcounter{equation}{0}

\section{Preliminaries}
Let us recall the basic definitions of fractional calculus (see \cite{KST, MSK}). The fractional integral of order $\alpha>0$ is defined by
$$
\left( I_{0+}^\alpha f\right) (t):=(g_{\alpha}\ast f)(t)= \int_0^t g_{\alpha}(t-s)f(s)\,ds \quad (t>0),
$$
where
\begin{equation*}
g_{\alpha}(t):=
  \left\{
\begin{array}{ll}
\frac{t^{\alpha-1}}{\Gamma(\alpha)},&  t>0, \\
0,          &              t\leq0,
\end{array}\right.
\end{equation*}
and $\Gamma(\alpha)$ is the Gamma function. Set moreover $g_0(t):=\delta(t)$, the Dirac delta-function.
The Riemann-Liouville derivative of order $\alpha>0$ is
$$
\left( D_{0+}^\alpha f\right) (t):=\left( \frac{d}{dt}\right)^m (I_{0+}^{m-\alpha }f)(t),
$$
where $m=\lceil\alpha\rceil$ is the smallest integer greater than or equal to $\alpha,$ and the Caputo fractional derivative of order $\alpha>0$ is defined by
$$
(\mathbf{D}_{t}^{\alpha} f)(t):=\left( D_{0+}^\alpha f\right) (t)-
\sum\limits_{k=0}^{m-1}\frac{f^{(k)}(0)}{\Gamma (k-\alpha +1)}t^{k-\alpha}.
$$
On smooth functions $f(\cdot)$ one has $(\mathbf{D}_{t}^{\alpha}f)(t)= I_{0+}^{m-\alpha}f^{(m)}(t)$.

 In Section \ref{MS} we shall use some special functions. The Wright function $\phi(\rho,\mu;z)$ is defined by the series
   \begin{equation*}
\phi(\rho,\mu;z):=\sum_{k=0}^{\infty}\frac{z^k}{k!\,\Gamma(\rho k+\mu)} \quad (\rho >-1,\; \mu,z\in\mathbb{C} ),
  \end{equation*}
which is convergent in the whole $z$-complex plane. We will be interested in the case $0<\rho \leq 1.$ The following Laplace transform
     \begin{equation} \label{LTW}
\int_0^\infty e^{-\lambda s} s^{\nu\rho} \phi(\rho,1+\nu\rho,-ts^{\rho})\;ds = \lambda^{-1-\nu\rho} \text{exp}(-t\lambda^{-\rho}),
     \end{equation}
for $1+\nu\rho\geq 0,\text{Re}\lambda>0,\, t>0$,  can be obtained from the power series representation of $\phi$ and changing the order of summation and integration.

In \cite{L} is obtained the indicator function of the Wright function for $\rho> 0,$
\begin{equation*}
h(\theta):= \limsup_{r\rightarrow +\infty}\frac{\log|\phi(\rho,\mu;r\text{e}^{i\theta})|}{r^q}=\sigma \cos(q\theta)\quad (|\theta|\leq\pi),
\end{equation*}
where $q:=1/(1+\rho)$ and $\sigma:=(1+\rho)\rho^{-\rho/(1+\rho)}$ are the order and the type of the entire function $\phi(\rho,\mu;z)$ as a function of $z,$ respectively. Therefore, for $0<\rho <1$, as $q >1/2,$ we obtain that there exist positive constants $a,L$ such that
\begin{equation}\label{b_W}
  |\phi(\rho,\mu;-r)|\leq L \text{e}^{-ar^q} \quad (r\geq0).
\end{equation}

Next we recall that the Bessel function of first kind of order a nonnegative real number $\nu$, which we shall denote by $J_\nu,$ are defined by means of
  $$J_\nu(z):=\left(\frac{z}{2}\right)^\nu \sum_{k=0}^{\infty}\frac{(-1)^k (z/2)^{2k}}{k! \, \Gamma(\nu+k+1) }=\left(\frac{z}{2}\right)^\nu \phi(1,1+\nu,-z^{2}/4).$$

  From the asymptotic expansion of $J_\nu$ we know that $J_\nu(r) = O(r^{-1/2})$ as $r \rightarrow +\infty$ (\cite [\S 7.21]{W}). Therefore, $J_{1+\nu}(2\sqrt{r}) r^{-\delta}\in L^1(0,\infty),$ for all $3/4<\delta\leq (3+\nu)/2.$

Let $\beta\geq 0.$ Taking $\rho=1$ and $\nu=1+\beta$ in (\ref{LTW}) we obtain the following Laplace transform ( see also \cite[\S13.3 (4)]{W})

\begin{equation}\label{LT}
\int_0^\infty \text{e}^{-\lambda s} J_{1+\beta}(2\sqrt{st})\, s^{\frac{1+\beta}{2}}\, ds = t^{\frac{1+\beta}{2}}\lambda^{-2-\beta}\text{e}^{-t/\lambda},
\end{equation}
for all $\text{Re}\lambda>0,\,t>0.$

We now turn to consider sectorial operators. For additional information about sectorial operators see \cite{Ha,MS}.
\begin{definition}
We say that a linear operator $A$ is sectorial of angle $\omega$ with $\omega\in [0,\pi
)$ if $\sigma (A)\ \subseteq\overline{\Sigma _{\omega}}$, and for every $\omega ^{\prime }\in (\omega,\pi )$%
\begin{equation*}
\sup \{\| z(z-A)^{-1}\| :z\in \mathbb{C}\backslash \overline{%
\Sigma _{\omega^{\prime }}}\}<\infty .
\end{equation*}%
In short, $A\in \text{Sect}(\omega )$.
\end{definition}

It is worthwhile to highlight that the class of sectorial operators is stable under inversion. Let $A\in \text{Sect}(\omega )$ and assume that $A$ is injective. Then for all $\lambda \in \rho (A)$, $%
\lambda \neq 0$, we have $\lambda ^{-1}\in \rho (A^{-1})$ with
\begin{equation*}
(\lambda ^{-1}-A^{-1})^{-1}=\lambda (1-\lambda (\lambda -A)^{-1}).
\end{equation*}
Therefore, $A^{-1}\in \text{Sect}(\omega )$.

For sectorial operators we have $\ker (A)\cap \overline{R(A)}=\{0\}$. Hence, if $%
\overline{R(A)}=X$, then $A$ is injective. The converse is also true in reflexive Banach spaces.

\section{Integrated fractional resolvent operator functions}\label{ifrof}

Let us recall the definition of the $\alpha$-resolvent operator function for the abstract Cauchy problem
\begin{equation}\label{FCP}
(\mathbf{D}_{t}^{\alpha} u)(t)=Au(t), \quad t>0;  \quad u(0)=x,
\end{equation}
where $\mathbf{D}_{t}^{\alpha}$ is the Caputo derivative of order $\alpha>0$ (\cite{B}).

The Cauchy problem (\ref{FCP}) is well-posed if and only if the following
Volterra integral equation $$\label{1.3}
u(t)=x+\int_{0}^{t}g_{\alpha}(t-s)Au(s){d}s\quad (t\geq 0),$$
is well-posed in the sense of \cite[Definition 1.2]{P}.

\begin{definition}
Let $\alpha>0.$ A function  $R_{\alpha}:[0,+\infty)\rightarrow B(X)$ is called an
$\alpha$-resolvent operator function ($\alpha$-ROF for short) generated by $A,$ if the following
conditions are satisfied:

    \begin{enumerate}[(i)]
     \item $R_{\alpha}(t)$ is strongly continuous for $t\geq0$ and $R_{\alpha}(0)=I,$ where $I$ is the identity on $X$;
    \item $R_{\alpha}(t)D(A)\subset D(A)$ and $AR_{\alpha}(t)x=R_{\alpha}(t)Ax$ for all $x\in D(A)$, $t\geq0$;
    \item for $x \in D(A)$, the resolvent equation
    $$ R_\alpha(t)x = x + \int_0^t g_\alpha(t-s) R_\alpha(s)Ax ds,$$
    holds for all $t\geq 0.$
    \end{enumerate}
    The $\alpha$-ROF $R_{\alpha }$ is called analytic if it admits an analytic extension to a sector $\Sigma _{\theta_{0}}$ for some $\theta _{0}\in (0,\pi /2]$. The analytic $\alpha$-ROF $R_{\alpha }$ is said to be bounded if for each $\theta \in (0,\theta _{0})$ there exists a constant $M_\theta\geq 1$ such that
    \begin{equation*}
    \left\Vert R_{\alpha }(z)\right\Vert \leq M_\theta \quad (z\in\Sigma_{\theta }),
    \end{equation*}
    and we will write in short $A\in\mathcal{A}_{\alpha }(\theta _{0})$.
\end{definition}

For a densely defined operator $A,$ it is shown in \cite{B} that the  Cauchy problem (\ref{FCP}) is
well-posed if and only if $A$ generates an $\alpha$-ROF.

\begin{definition}
    An $\alpha$-ROF $R_\alpha$ is said exponentially bounded if there exists constants $M\geq 1$ and $w\geq0$ such that
    \begin{equation}\label{eb}
    \|R_\alpha(t)\|\leq M \text{e}^{wt} \quad (t\geq 0).
    \end{equation}
\end{definition}

Along this paper we shall consider only exponentially bounded $\alpha$-ROFs.

\begin{lemma}(\cite[Theorem 2.9]{B}) The operator $A$ generates an $\alpha$-ROF $R_\alpha$ satisfying condition (\ref{eb}) if and only if $(w^\alpha,+\infty) \subset \rho(A)$ and
\begin{equation}
\label{rmfrf} \lambda^{\alpha -1} (\lambda^\alpha-A)^{-1}x = \int_0^\infty e^{-\lambda t}R_\alpha(t)x\, dt \quad (\lambda>w,\,x\in X).
\end{equation}
\end{lemma}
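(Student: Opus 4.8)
The plan is to treat this as a Hille--Yosida--Widder type generation theorem, in which the vector-valued Laplace transform serves as the dictionary between the time-domain resolvent equation (iii) and the spectral representation (\ref{rmfrf}). Throughout I would set $H(\lambda)x:=\int_0^\infty e^{-\lambda t}R_\alpha(t)x\,dt$; by the exponential bound (\ref{eb}) this Bochner integral converges absolutely for $\text{Re}\lambda>w$ and defines $H(\lambda)\in B(X)$ with $\|H(\lambda)\|\le M/(\text{Re}\lambda-w)$. Two standard facts are used repeatedly: a closed operator commutes with an absolutely convergent Bochner integral (so $A$ may be pulled through the Laplace and convolution integrals once the integrand and its image under $A$ are integrable), and two continuous, exponentially bounded $X$-valued functions with the same Laplace transform coincide. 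I also recall that $\widehat{g_\alpha}(\lambda)=\lambda^{-\alpha}$ and that the constant function $t\mapsto x$ has Laplace transform $\lambda^{-1}x$.

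For the necessity direction, suppose $A$ generates $R_\alpha$ and (\ref{eb}) holds. First I would rewrite (iii) using the commutation property (ii) and the closedness of $A$: for $x\in D(A)$ one has $R_\alpha(t)x-x=(g_\alpha*R_\alpha(\cdot)Ax)(t)=A\,(g_\alpha*R_\alpha(\cdot)x)(t)$, and, by density of $D(A)$, this extends by continuity to $(g_\alpha*R_\alpha(\cdot)x)(t)\in D(A)$ with $A(g_\alpha*R_\alpha(\cdot)x)(t)=R_\alpha(t)x-x$ for all $x\in X$. Taking Laplace transforms and moving $A$ outside the integral yields $H(\lambda)x=\lambda^{-1}x+\lambda^{-\alpha}AH(\lambda)x$, i.e. $(\lambda^\alpha-A)H(\lambda)x=\lambda^{\alpha-1}x$ for every $x\in X$; transforming (iii) directly for $x\in D(A)$ gives the companion identity $H(\lambda)(\lambda^\alpha-A)x=\lambda^{\alpha-1}x$. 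Together these show $\lambda^\alpha\in\rho(A)$ with $(\lambda^\alpha-A)^{-1}=\lambda^{1-\alpha}H(\lambda)$ for $\lambda>w$, which is precisely (\ref{rmfrf}); since $\lambda\mapsto\lambda^\alpha$ maps $(w,\infty)$ onto $(w^\alpha,\infty)$, this also gives $(w^\alpha,+\infty)\subset\rho(A)$.

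For the sufficiency direction I would run the computation backwards. Given the spectral inclusion and (\ref{rmfrf}) for a strongly continuous, exponentially bounded $R_\alpha$, for $x\in D(A)$ the resolvent identity rearranges to $H(\lambda)x=\lambda^{-1}x+\lambda^{-\alpha}H(\lambda)Ax$; recognizing the right-hand side as the Laplace transform of $t\mapsto x+(g_\alpha*R_\alpha(\cdot)Ax)(t)$, the uniqueness theorem recovers the resolvent equation (iii). To obtain (ii) I would start from $\widehat{R_\alpha(\cdot)Ax}(\lambda)=\lambda^{\alpha-1}(\lambda^\alpha-A)^{-1}Ax=A\,H(\lambda)x$ for $x\in D(A)$ and pass from this transform-level identity to the pointwise statement $R_\alpha(t)x\in D(A)$ and $AR_\alpha(t)x=R_\alpha(t)Ax$. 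Property (i) is part of the data on $R_\alpha$, the value $R_\alpha(0)=I$ being consistent with the initial-value behaviour $\lim_{\lambda\to\infty}\lambda^\alpha(\lambda^\alpha-A)^{-1}x=x$.

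The step I expect to be the main obstacle is exactly this last passage: deducing the pointwise membership $R_\alpha(t)x\in D(A)$ and the commutation $AR_\alpha(t)x=R_\alpha(t)Ax$ from the transform-level equality $A\,\widehat{R_\alpha(\cdot)x}(\lambda)=\widehat{R_\alpha(\cdot)Ax}(\lambda)$. One cannot invert the Laplace transform under the unbounded operator $A$ directly; instead I would combine the closedness of $A$ with the uniqueness theorem applied to antiderivatives to obtain first $\int_0^tR_\alpha(s)x\,ds\in D(A)$ with $A\int_0^tR_\alpha(s)x\,ds=\int_0^tR_\alpha(s)Ax\,ds$, and then differentiate, using strong continuity, to reach the pointwise identity. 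The remaining work --- convergence of the Laplace integrals, pulling $A$ through absolutely convergent integrals, and the elementary algebra with the powers $\lambda^\alpha$ --- is routine.
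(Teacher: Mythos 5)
The paper offers no proof of this lemma: it is imported verbatim from Bajlekova's thesis (\cite[Theorem~2.9]{B}), so there is no in-paper argument to compare against. Your Laplace-transform proof is the standard one and is essentially the argument of the cited source (and of Pr\"uss's generation theorem for Volterra equations), and the step you single out as delicate --- passing from the transform-level identity $A\widehat{R_\alpha(\cdot)x}(\lambda)=\widehat{R_\alpha(\cdot)Ax}(\lambda)$ to the pointwise commutation --- is handled the right way: one cannot use uniqueness alone, because $A\int_0^t R_\alpha(s)x\,ds$ is not known to be defined a priori; one needs the standard closed-operator lemma obtained from an approximative inversion (Phragm\'en--Doetsch or Post--Widder), after which closedness and differentiation finish the job, exactly as you sketch.

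The one point you should make explicit is the role of density of $D(A)$. The definition of $\alpha$-ROF in this paper does not include it, yet your necessity argument uses it twice in an essential way: to extend $A(g_\alpha\ast R_\alpha(\cdot)x)(t)=R_\alpha(t)x-x$ from $D(A)$ to all of $X$ (without which you only get injectivity of $\lambda^\alpha-A$, not surjectivity, hence not $\lambda^\alpha\in\rho(A)$), and again in the sufficiency direction if you want $R_\alpha(0)=I$ on all of $X$ from the Abelian limit $\lambda^\alpha(\lambda^\alpha-A)^{-1}x\to x$, which a priori holds only on $\overline{D(A)}$. In Bajlekova's thesis density is a standing hypothesis, and the present paper notes (via \cite[Lemma~3]{LMPP}) that it is automatic for generators of exponentially bounded $\alpha$-ROFs; you should either adopt it as a hypothesis, as the cited source does, or invoke that result explicitly --- taking care that the density proof you appeal to does not itself rest on the resolvent representation you are trying to establish. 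With that caveat recorded, the proposal is correct.
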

In \cite[Lemma 3]{LMPP} it is shown that if $A$ generates an exponentially bounded $\alpha$-ROF, then $A$ is densely defined.

It is well known that analyticity in the case $\alpha=2,$ that is,  cosine operator functions, implies boundedness of the generator of the family. Therefore we shall restrict ourselves to the case $0<\alpha<2.$
We shall use the following characterization of  generators of  bounded analytic $\alpha$-ROFs \cite[Corollary 2.16]{B}.

\begin{lemma}
\label{Generation_ba}Let $\alpha \in (0,2)$ and $\theta _{0}\in (0,\min \{%
\frac{\pi }{2},\frac{\pi }{\alpha }-\frac{\pi }{2}\}]$. Assume that $A$ is a densely defined operator. The following
assertions are equivalent.

\begin{enumerate}[(i)]
\item $A\in \mathcal{A}_{\alpha }(\theta _{0}).$

\item $\Sigma _{\alpha (\frac{\pi }{2}+\theta _{0})}\subset \rho (A)$
and for each $\theta \in (0,\theta _{0})$ there exists a constant $M_{\theta
}$ such that%
\begin{equation*}
\| z(z-A)^{-1}\| \leq M_{\theta }\;\;(z\in \Sigma _{\alpha (%
\frac{\pi }{2}+\theta)}).
\end{equation*}

\item $-A\in \text{Sect}(\pi -(\frac{\pi }{2}+\theta _{0})\alpha )$.
\end{enumerate}
\end{lemma}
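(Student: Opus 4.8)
The plan is to run the proof as a short cycle of equivalences, treating (ii) and (iii) as two bookkeeping versions of the same sectorial resolvent estimate on $A$, and linking both to (i) through the Laplace representation (\ref{rmfrf}) and its inversion. The bridge throughout is the change of variable $z=\lambda^{\alpha}$, which maps the half-plane-type sector $\Sigma_{\pi/2+\theta}$ in the $\lambda$-variable onto $\Sigma_{\alpha(\pi/2+\theta)}$ in the $z$-variable; this is what converts growth information about $R_\alpha$ on $\Sigma_{\theta_0}$ into a resolvent bound on a sector of $A$, and conversely. The standing hypothesis $\theta_0\le\pi/\alpha-\pi/2$ is precisely what keeps $\alpha(\pi/2+\theta_0)\le\pi$, so that the substitution is a well-defined, injective map of sectors.

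First I would dispose of (ii) $\Leftrightarrow$ (iii), which is purely a reflection of a sector through the origin. Since $\sigma(-A)=-\sigma(A)$ and $w\mapsto -w$ carries $\overline{\Sigma_\omega}$ onto $\{w:|\arg w|\ge\pi-\omega\}$, the spectral inclusion $\sigma(-A)\subseteq\overline{\Sigma_{\pi-(\pi/2+\theta_0)\alpha}}$ is exactly $\rho(A)\supseteq\Sigma_{\alpha(\pi/2+\theta_0)}$, the two half-angles being complementary because $\big(\pi-(\pi/2+\theta_0)\alpha\big)+\alpha(\pi/2+\theta_0)=\pi$. Applying the same reflection to the bound in the definition of $\mathrm{Sect}(\cdot)$, the estimate $\|w(w+A)^{-1}\|\le M$ for $w\notin\overline{\Sigma_{\omega'}}$ becomes $\|z(z-A)^{-1}\|\le M$ for $z\in\Sigma_{\alpha(\pi/2+\theta)}$ upon writing $z=-w$ and letting $\omega'\downarrow\pi-(\pi/2+\theta)\alpha$. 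Both implications fall out of this identification at once.

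For (i) $\Rightarrow$ (ii) I would start from (\ref{rmfrf}) on the positive real axis and exploit the analyticity of $R_\alpha$ on $\Sigma_{\theta_0}$. By Cauchy's theorem, for $|\psi|<\theta<\theta_0$ the integration ray may be rotated,
\[
\lambda^{\alpha-1}(\lambda^\alpha-A)^{-1}=\int_0^\infty e^{-\lambda e^{i\psi}s}\,R_\alpha(e^{i\psi}s)\,e^{i\psi}\,ds,
\]
the rotation being justified by $\|R_\alpha(\cdot)\|\le M_\theta$ on $\Sigma_\theta$ together with the decay of $e^{-\lambda e^{i\psi}s}$ that kills the arc at infinity. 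The rotated integral converges whenever $\mathrm{Re}(\lambda e^{i\psi})>0$, so letting $\psi$ range over $(-\theta,\theta)$ extends $\lambda^{\alpha-1}(\lambda^\alpha-A)^{-1}$ analytically to $\lambda\in\Sigma_{\pi/2+\theta}$ with the crude bound $\|\lambda^{\alpha-1}(\lambda^\alpha-A)^{-1}\|\le M_\theta/\mathrm{Re}(\lambda e^{i\psi})$. Choosing $\psi$ so that $\arg\lambda+\psi$ stays bounded away from $\pm\pi/2$ on any proper subsector keeps $\mathrm{Re}(\lambda e^{i\psi})$ comparable to $|\lambda|$, which turns this into $\|\lambda^\alpha(\lambda^\alpha-A)^{-1}\|\le M'_\theta$; the substitution $z=\lambda^\alpha$ then yields $\Sigma_{\alpha(\pi/2+\theta_0)}\subset\rho(A)$ together with the estimate in (ii).

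The reverse implication (ii) $\Rightarrow$ (i) is the crux and the main obstacle, since here the family must be manufactured from the resolvent bound. I would reconstruct it by the inverse-Laplace (Hankel-type) formula
\[
R_\alpha(z)=\frac{1}{2\pi i}\int_\Gamma e^{\lambda z}\,\lambda^{\alpha-1}(\lambda^\alpha-A)^{-1}\,d\lambda,
\]
where, for a target subsector $\Sigma_\theta$ with $\theta<\theta_0$, the contour $\Gamma$ runs from $\infty e^{-i\phi}$ to $\infty e^{i\phi}$ with $\phi\in(\pi/2+\theta,\pi/2+\theta_0)$, opened wider than the subsector but still keeping $\lambda^\alpha$ inside the sector on which the bound of (ii) is available. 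The delicate points, where essentially all the work lies, are: (a) that the resolvent estimate forces the integral to converge and to define a $B(X)$-valued analytic function of $z$ on $\Sigma_{\theta_0}$, the decay of $e^{\lambda z}$ along the tails of $\Gamma$ being what controls convergence for $|\arg z|\le\theta$; (b) the uniform bound $\|R_\alpha(z)\|\le M_\theta$ on $\Sigma_\theta$, obtained by deforming $\Gamma$ so that its circular arc has radius $|\lambda|\sim 1/|z|$ and estimating the arc and the two rays separately, the subtle requirement being that the resulting constant be independent of $z$; and (c) the verification that this $R_\alpha$ has Laplace transform $\lambda^{\alpha-1}(\lambda^\alpha-A)^{-1}$ (via Fubini and a residue computation) so that, by the uniqueness in the generation lemma giving (\ref{rmfrf}), it is genuinely the $\alpha$-ROF generated by $A$, together with $R_\alpha(0)=I$ and the resolvent equation. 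Step (b) is the technically heaviest, being the fractional analogue of the classical estimate for bounded analytic semigroups, and the restriction $\theta_0\le\pi/\alpha-\pi/2$ is exactly what leaves room to place $\Gamma$ inside the sector of resolvent boundedness.
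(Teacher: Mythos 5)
The paper does not prove this lemma at all: it is quoted verbatim from Bajlekova's thesis (\cite[Corollary 2.16]{B}), so there is no in-paper argument to compare against. Your outline is correct and is essentially the standard proof found in that reference --- the reflection $z\mapsto -z$ for (ii)$\Leftrightarrow$(iii), contour rotation of the Laplace integral for (i)$\Rightarrow$(ii), and the Hankel-contour reconstruction of $R_\alpha$ for (ii)$\Rightarrow$(i) --- with the genuinely technical points (the uniform bound via rescaling the arc to radius $1/|z|$, the identification of the analytic continuation with the resolvent, and the use of density of $D(A)$ to get $R_\alpha(0)=I$) correctly identified as where the work lies.
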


As in the case of bounded analytic $C_0-$semigroups, bounded analytic $\alpha$-ROF have a nice behaviour at infinity. Using the argumentation in \cite[Proposition 2.15]{B}, one can prove the following result.

\begin{lemma}\label{pa} Let $0<\alpha<2$ and $\theta _{0}\in (0,\pi /2]$. Assume that $A\in \mathcal{A}_{\alpha }(\theta _{0})$ and denote by $R_{\alpha}$ the $\alpha$-ROF generated by $A.$ Then

\begin{enumerate}[(i)]
  \item  $A\,R_{\alpha}(t)\in B(X)$, and
\begin{equation*}
\| AR_{\alpha}(t)\| \leq C\,t^{-\alpha}\;\;(t>0).
\end{equation*}
In particular,
\begin{equation*}
\| R_{\alpha}(t)x\| \leq C\,t^{-\alpha}\| y\| \;\;(t>0,\,y\in D(A), x=Ay).
\end{equation*}
  \item  $\lim_{t\rightarrow +\infty} R_{\alpha}(t)x=0$ $(x\in \overline{R(A)}).$
  \item $\| \frac{d^{n}}{dt^{n}}R_{\alpha}(t)\| \leq L t^{-n} \;\; (t>0,\, n\in \mathbb{N}).$
\end{enumerate}

\end{lemma}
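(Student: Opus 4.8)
The plan is to realize $R_\alpha$ through a Dunford-type contour integral and then read off all three estimates from the resolvent bound. Since $A\in\mathcal A_\alpha(\theta_0)$, Lemma~\ref{Generation_ba} supplies $\Sigma_{\alpha(\pi/2+\theta)}\subset\rho(A)$ together with $\|z(z-A)^{-1}\|\le M_\theta$ on that sector for every $\theta\in(0,\theta_0)$; equivalently, writing $z=\lambda^\alpha$, the weighted resolvent $\lambda^{\alpha-1}(\lambda^\alpha-A)^{-1}$ is holomorphic and bounded by $M_\theta/|\lambda|$ for $\lambda$ in the sector $\Sigma_{\pi/2+\theta}$, whose opening is strictly larger than $\pi/2$. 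Inverting the Laplace transform~\eqref{rmfrf} and deforming the Bromwich line into this sector (legitimate because the analytic extension of $R_\alpha$ is bounded on $\Sigma_{\theta_0}$), I would obtain the representation
\begin{equation*}
R_\alpha(t)=\frac{1}{2\pi i}\int_\Gamma e^{\lambda t}\,\lambda^{\alpha-1}(\lambda^\alpha-A)^{-1}\,d\lambda\qquad(t>0),
\end{equation*}
where $\Gamma$ runs from $\infty e^{-i\psi}$ to $\infty e^{i\psi}$ with $\pi/2<\psi<\pi/2+\theta$ (and a small arc around the origin), so that $\operatorname{Re}\lambda\to-\infty$ along the two rays and $e^{\lambda t}$ decays exponentially.

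For part (i) I would push $A$ inside. As $(\lambda^\alpha-A)^{-1}X\subset D(A)$ and $A(\lambda^\alpha-A)^{-1}=\lambda^\alpha(\lambda^\alpha-A)^{-1}-I$ is \emph{uniformly} bounded on $\Gamma$ by $1+M_\theta$, the integrand $\lambda^{\alpha-1}A(\lambda^\alpha-A)^{-1}$ decays like $|\lambda|^{\alpha-1}$ and is therefore also integrable near the origin; closedness of $A$ then yields $R_\alpha(t)X\subset D(A)$ and
\begin{equation*}
AR_\alpha(t)=\frac{1}{2\pi i}\int_\Gamma e^{\lambda t}\,\lambda^{\alpha-1}A(\lambda^\alpha-A)^{-1}\,d\lambda .
\end{equation*}
Since the origin is now harmless I may collapse the arc and take $\Gamma$ to be the two rays $\arg\lambda=\pm\psi$, which is invariant under the rescaling $\lambda=\mu/t$; this substitution extracts a factor $t^{-\alpha}$ and leaves
\begin{equation*}
\|AR_\alpha(t)\|\le t^{-\alpha}\,\frac{1+M_\theta}{2\pi}\int_\Gamma |e^{\mu}|\,|\mu|^{\alpha-1}\,|d\mu|=C\,t^{-\alpha},
\end{equation*}
the last integral converging by the exponential decay of $e^\mu$ along the rays. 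The \emph{in particular} claim follows from property~(ii) of the definition: for $x=Ay$ with $y\in D(A)$ one has $R_\alpha(t)x=R_\alpha(t)Ay=AR_\alpha(t)y$, so $\|R_\alpha(t)x\|\le Ct^{-\alpha}\|y\|$.

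Part (ii) is then immediate on $R(A)$, since $\|R_\alpha(t)Ay\|\le Ct^{-\alpha}\|y\|\to0$ as $t\to\infty$ (recall $\alpha>0$); an $\varepsilon/3$-argument using the uniform bound $\sup_{t\ge0}\|R_\alpha(t)\|\le M$ (from boundedness of the analytic extension together with $R_\alpha(0)=I$) transports the conclusion to the closure $\overline{R(A)}$. For part (iii) I would bypass the resolvent and invoke Cauchy's estimates for the bounded holomorphic map $z\mapsto R_\alpha(z)$ on $\Sigma_\theta$: for fixed $t>0$ the disk centred at $t$ of radius $r=t\sin\theta$ lies in $\Sigma_\theta$, so the $n$-th order Cauchy integral formula gives $\|R_\alpha^{(n)}(t)\|\le n!\,M_\theta\,r^{-n}=L\,t^{-n}$ with $L=n!\,M_\theta(\sin\theta)^{-n}$.

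The genuinely delicate step is the first one: establishing the contour representation and justifying the deformation of the Bromwich line into $\Sigma_{\pi/2+\theta}$, which needs the boundedness of the analytic extension to control the arcs at infinity and near the origin, and the verification that the rescaled contour in part (i) may be identified with $\Gamma$. Once this representation and the uniform resolvent bound of Lemma~\ref{Generation_ba} are in hand, the three estimates are routine; this is precisely the line of reasoning indicated for \cite[Proposition 2.15]{B}.
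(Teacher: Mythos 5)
Your proposal is correct and follows essentially the same route as the paper, which for this lemma simply invokes the argument of \cite[Proposition 2.15]{B}: the Dunford--Taylor contour representation of $R_\alpha$ obtained from the sectorial resolvent bound of Lemma \ref{Generation_ba}, the identity $A(\lambda^\alpha-A)^{-1}=\lambda^\alpha(\lambda^\alpha-A)^{-1}-I$ with a rescaling of the contour for (i), commutation plus density for (ii), and Cauchy's estimates on disks inside $\Sigma_\theta$ for (iii). The one step you rightly flag as delicate --- justifying the contour representation itself --- is most cleanly handled by defining the operator family directly by the contour integral and identifying it with $R_\alpha$ via uniqueness of Laplace transforms, exactly as in the cited reference.
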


Following \cite{CL}, we introduce the notion of integrated fractional resolvent operator function. It can be viewed as a generalization of the notions of integrated semigroups or integrated cosine operator functions.

\begin{definition}
Let $\alpha>0,\,\beta\geq 0.$  An operator $A$ is called the generator of a $\beta$-times integrated $\alpha$-resolvent operator family (for short an $(\alpha,\beta)$-ROF) if there exists a constant $\omega\geq 0$ such that $(\omega^\alpha,+\infty) \subset \rho(A)$ and there exists a strongly  continuous mapping $R_{\alpha,\beta}:[0,+\infty) \rightarrow B(X)$  satisfying
$  \|R_{\alpha,\beta}(t)\|\leq M \text{e}^{\omega t},$ $t\geq 0,$  for a suitable constant $M\ge0,$ such that
\begin{equation}\label{ris}
\lambda^{\alpha-\beta -1} (\lambda^\alpha -A)^{-1}x =\int_0^\infty e^{-\lambda t}R_{\alpha,\beta}(t)x\, dt \quad (x\in X),
\end{equation}
for all $\lambda >\omega.$ In this case $R_{\alpha,\beta}$ is the $(\alpha,\beta)$-ROF generated by $A.$
\end{definition}

By the uniqueness theorem of the Laplace transforms $\{R_{\alpha,\beta}(t)\}_{t\geq0}$ is uniquely determined. The case $\beta=0$ is consistent with the definition of the generator of an $\alpha$-ROF, thus we shall use the notation $R_{\alpha}\equiv R_{\alpha,0}$. Note that $(1,\beta)$-ROFs are $\beta-$times integrated semigroups, $(2,\beta)$-ROFs are $\beta$-times integrated cosine operator functions, and in the particular case of $(2,1)$-ROFs we have sine operator functions. Concrete examples of differential operators that are generators of integrated semigroups or integrated cosine operator functions can be found in \cite{ABHN,H,XL}. We refer the reader to \cite{LS} for the connection between  integrated fractional resolvent operator functions and fractional Cauchy problems.

By means of the Laplace transform it is easy to show that if $A$ is the generator of a $(\alpha,\beta)$-ROF $R_{\alpha,\beta}$, then for $\gamma >\beta$ the operator $A$ generates the $(\alpha,\gamma)$-ROF given by
$ R_{\alpha,\gamma}(t)x=(I_{0+}^{\gamma-\beta}R_{\alpha,\beta}(\cdot)x)(t) \; (t \geq 0, x\in X).$ To our knowledge the situation of $A$ generating an $(\alpha,\delta)$-ROF for $\delta<\beta$ has not been clarified yet (see \cite[Lemma 2.4]{ANS} for the $k$-times integrated semigroup case for integer $k$). We include here the following characterization.

\begin{proposition}\label{mreg}
 Let $\alpha>0,$ $0\leq \delta<\beta,$ $\eta=\beta-\delta$ and $m=\lceil\eta\rceil$. Let $A$ be the generator of an $(\alpha,\beta)$-ROF $R_{\alpha,\beta}$. Then the following are equivalent:
 \begin{enumerate}[(i)]
 \item $A$ is the generator of an $(\alpha,\delta)$-ROF.
 \item There exists an exponentially bounded, strongly continuous mapping $R:[0,+\infty) \rightarrow B(X)$ such that  $R_{\alpha,\beta}(s)x=(I_{0+}^{\eta}R(t)x)(s), \, s \geq 0, \, x \in X$.
 \item For all $x\in X,$  $f(t):=R_{\alpha,\beta}(t)x\in C^{m-1} ([0, \infty); X)$, $I_{0+}^{m-\eta}f\in C^{m} ([0, \infty); X)$, $\mathbf{D}_{t}^{\eta}f:[0,+\infty) \rightarrow B(X)$ is exponentially bounded, and $f^{(k)}(0)=0$ for any $k=0,1,\ldots,m-1.$
 \end{enumerate}
 When these assertions hold, the $(\alpha,\delta)$-ROF generated by $A$ is given by
    \begin{equation}\label{delttis}
    R_{\alpha,\delta}(s)x=(\mathbf{D}_{t}^{\eta}R_{\alpha,\beta}(t)x)(s) \quad (s \geq 0, \, x \in X).
    \end{equation}
\end{proposition}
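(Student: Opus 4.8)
The plan is to prove the circle of equivalences by passing to the Laplace transform for $\mathrm{(i)}\Leftrightarrow\mathrm{(ii)}$ and by exploiting the elementary composition rules of fractional calculus for $\mathrm{(ii)}\Leftrightarrow\mathrm{(iii)}$, with formula (\ref{delttis}) dropping out of the argument along the way. The guiding observation is that passing from $\beta$ to $\delta=\beta-\eta$ in (\ref{ris}) corresponds to multiplication by $\lambda^{\eta}$ on the Laplace side, which in the time domain is fractional differentiation of order $\eta$.

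For $\mathrm{(i)}\Leftrightarrow\mathrm{(ii)}$ I would use that $\widehat{I_{0+}^{\eta}h}(\lambda)=\lambda^{-\eta}\widehat{h}(\lambda)$ together with the uniqueness theorem for the Laplace transform. If $\mathrm{(i)}$ holds, set $R:=R_{\alpha,\delta}$; then $\widehat{I_{0+}^{\eta}R(\cdot)x}(\lambda)=\lambda^{-\eta}\lambda^{\alpha-\delta-1}(\lambda^{\alpha}-A)^{-1}x=\lambda^{\alpha-\beta-1}(\lambda^{\alpha}-A)^{-1}x$, which is exactly $\widehat{R_{\alpha,\beta}(\cdot)x}(\lambda)$, and uniqueness yields the representation in $\mathrm{(ii)}$. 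Conversely, if $\mathrm{(ii)}$ holds with some exponentially bounded, strongly continuous $R$, then taking Laplace transforms in $R_{\alpha,\beta}=I_{0+}^{\eta}R$ gives $\widehat{R}(\lambda)x=\lambda^{\eta}\lambda^{\alpha-\beta-1}(\lambda^{\alpha}-A)^{-1}x=\lambda^{\alpha-\delta-1}(\lambda^{\alpha}-A)^{-1}x$, which is precisely condition (\ref{ris}) identifying $R$ as the $(\alpha,\delta)$-ROF generated by $A$, the resolvent-set requirement $(\omega^{\alpha},\infty)\subset\rho(A)$ being inherited from $R_{\alpha,\beta}$.

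For $\mathrm{(ii)}\Rightarrow\mathrm{(iii)}$ I would fix $x$, write $f=R_{\alpha,\beta}(\cdot)x=I_{0+}^{\eta}(R(\cdot)x)=g_{\eta}*(R(\cdot)x)$, and read off the regularity directly. Differentiating under the convolution and using $\eta>m-1\geq k+1$ for $k\leq m-2$, each step $\frac{d}{dt}I_{0+}^{\nu}=I_{0+}^{\nu-1}$ is legitimate, so $f^{(k)}=I_{0+}^{\eta-k}(R(\cdot)x)$ is continuous for $k\leq m-1$; hence $f\in C^{m-1}$, while $I_{0+}^{m-\eta}f=I_{0+}^{m}(R(\cdot)x)\in C^{m}$. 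Moreover $f^{(k)}(0)=\lim_{t\to0^{+}}I_{0+}^{\eta-k}(R(\cdot)x)(t)=0$, since $R(\cdot)x$ is bounded near $0$ and $\eta-k>0$. As all these initial values vanish, the Caputo and Riemann--Liouville derivatives of $f$ coincide, so $\mathbf{D}_{t}^{\eta}f=D_{0+}^{\eta}f=(d/dt)^{m}I_{0+}^{m-\eta}f=(d/dt)^{m}I_{0+}^{m}(R(\cdot)x)=R(\cdot)x$, which is exponentially bounded. This is $\mathrm{(iii)}$ and simultaneously establishes (\ref{delttis}).

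The implication $\mathrm{(iii)}\Rightarrow\mathrm{(ii)}$ is where I expect the real work, since it amounts to inverting fractional differentiation. Defining $R(\cdot)x:=\mathbf{D}_{t}^{\eta}f$ with $f=R_{\alpha,\beta}(\cdot)x$, which is strongly continuous and exponentially bounded by hypothesis, I must verify $I_{0+}^{\eta}(R(\cdot)x)=f$. Since $f^{(k)}(0)=0$ forces $\mathbf{D}_{t}^{\eta}f=D_{0+}^{\eta}f$, this reduces to the identity $I_{0+}^{\eta}D_{0+}^{\eta}f=f$. The standard composition formula gives $I_{0+}^{\eta}D_{0+}^{\eta}f=f-\sum_{j=0}^{m-1}\frac{t^{\eta-m+j}}{\Gamma(\eta-m+j+1)}\,g^{(j)}(0)$ with $g=I_{0+}^{m-\eta}f$, so the crux is to show every boundary term $g^{(j)}(0)$ vanishes. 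I would obtain this by differentiating the convolution $g=g_{m-\eta}*f$ repeatedly: using $f^{(k)}(0)=0$ for $k\leq m-1$, the boundary contributions drop out at each step, yielding $g^{(j)}=I_{0+}^{m-\eta}f^{(j)}$ for $j\leq m-1$, whence $g^{(j)}(0)=\lim_{t\to0^{+}}I_{0+}^{m-\eta}f^{(j)}(t)=0$ by continuity of $f^{(j)}$. With all boundary terms gone, $I_{0+}^{\eta}\mathbf{D}_{t}^{\eta}f=f$, that is $R_{\alpha,\beta}=I_{0+}^{\eta}R$, which is $\mathrm{(ii)}$. The main obstacle throughout is the careful bookkeeping in the fractional composition identity and the verification that the noninteger case behaves like the classical Taylor remainder, ensuring that the regularity hypotheses packaged into $\mathrm{(iii)}$ are exactly what is needed to annihilate the boundary terms.
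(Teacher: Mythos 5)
Your proposal is correct and follows essentially the same route as the paper: the link between (i) and (ii) via Laplace transform uniqueness, and the passage through the left-inverse property $I_{0+}^{\eta}\mathbf{D}_{t}^{\eta}f=f$ after checking that the boundary terms $(I_{0+}^{m-\eta}f)^{(j)}(0)$ vanish. You merely close the circle as $(iii)\Rightarrow(ii)$ instead of the paper's $(iii)\Rightarrow(i)$, and you spell out the convolution bookkeeping that the paper delegates to a cited lemma of Kilbas--Srivastava--Trujillo.
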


\begin{proof}
(i)$\rightarrow$(ii) Suppose $A$ is the generator of a $(\alpha,\delta)$-ROF $R_{\alpha,\delta}.$ Then $A$ is the generator of the $(\alpha,\beta)$-ROF $ I_{0+}^{\eta}R_{\alpha,\delta}.$ By the uniqueness of the $(\alpha,\beta)$-ROF generated by $A$ we obtain
$$R_{\alpha,\beta}(s)x=(I_{0+}^{\eta}R_{\alpha,\delta}(t)x)(s) \quad ( s \geq 0,\, x \in X).$$

(ii)$\rightarrow$(iii) It is a straightforward consequence of $f(t)$ being a fractional integral of order $\eta$.

(iii)$\rightarrow$(i) From the fact that $f\in C^{m-1} ([0, \infty); X)$ joint the conditions $f^{(k)}(0)=0$ for any $k=0,1,\ldots,m-1,$ it is not hard to show that
$$(I_{0+}^{m-\eta}f)^{(k)}(0)=0 \quad (k=0,1,\ldots,m-1).$$
Thus, in this case, the fractional integration is an operation inverse to the fractional differentiation from the left (\cite[Lemma 2.5 (b)]{KST}) so that
$$ R_{\alpha,\beta}(s)x=(I_{0+}^{\eta}\mathbf{D}_{t}^{\eta}R_{\alpha,\beta}(t)x)(s) \quad (s \geq 0,\, x\in X). $$
Then taking Laplace transforms in both sides of this equality we obtain
    $$\lambda^{\alpha-\beta-1}(\lambda^\alpha -A)^{-1}x = \lambda^{-\eta} \int_0^\infty e^{-\lambda t}\mathbf{D}_{t}^{\eta}R_{\alpha,\beta}(t)x\, dt \quad (x\in X),$$
for $\lambda $ large enough. Hence $A$ is the generator of the $(\alpha,\delta)$-resolvent family given by (\ref{delttis}).
\end{proof}

We shall use the following result which relates the $(\alpha,\beta)$-ROF and its generator.
\begin{lemma} (\cite[Proposition 3.10]{CL}) \label{relation_g_family} Let $A$ be the generator of an $(\alpha,\beta)$-ROF $R_{\alpha,\beta}$.
    \begin{enumerate}[(i)]
      \item For all $x\in D(A) $ and for all $t\ge0$ we have $R_{\alpha,\beta}(t)x\in D(A)$ with $AR_{\alpha,\beta}(t)x=R_{\alpha,\beta}(t)Ax,$ and
      \begin{equation*}
        R_{\alpha,\beta}(t)x =\frac{t^\beta}{\Gamma(\beta+1)}x+(I_{0+}^{\alpha}R_{\alpha,\beta}(.)Ax)(t).
      \end{equation*}
      \item For all $x\in X $ and for all $t\ge0$ we have $(I_{0+}^{\alpha}R_{\alpha,\beta}(.)x)(t)\in D(A)$ with
      \begin{equation*}
        A(I_{0+}^{\alpha}R_{\alpha,\beta}(.)x)(t)=R_{\alpha,\beta}(t)x-\frac{t^\beta}{\Gamma(\beta+1)}x.
      \end{equation*}
    \end{enumerate}
\end{lemma}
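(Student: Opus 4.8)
The plan is to derive both assertions from the defining Laplace transform identity (\ref{ris}) together with the closedness of $A$, transferring from the Laplace domain to the time domain by a standard uniqueness-plus-closedness argument.

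First I would establish the commutation relation in (i). Fix $x\in D(A)$. Since $A$ commutes with its resolvent, $A(\lambda^\alpha-A)^{-1}x=(\lambda^\alpha-A)^{-1}Ax$, so multiplying by $\lambda^{\alpha-\beta-1}$ and using (\ref{ris}) both for $x$ and for $Ax$ gives
$$A\int_0^\infty e^{-\lambda t}R_{\alpha,\beta}(t)x\,dt=\int_0^\infty e^{-\lambda t}R_{\alpha,\beta}(t)Ax\,dt,\qquad \int_0^\infty e^{-\lambda t}R_{\alpha,\beta}(t)x\,dt\in D(A),$$
for all $\lambda>\omega$. Writing $u(t):=R_{\alpha,\beta}(t)x$ and $v(t):=R_{\alpha,\beta}(t)Ax$, this says $\widehat{u}(\lambda)\in D(A)$ and $A\widehat{u}(\lambda)=\widehat{v}(\lambda)$. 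The standard closed-operator/Laplace lemma (see \cite{ABHN}) then yields the primitive identity $A\int_0^t u(s)\,ds=\int_0^t v(s)\,ds$ for every $t\ge0$. Since $u,v$ are continuous, the difference quotients $h^{-1}\bigl(\int_0^{t+h}u-\int_0^t u\bigr)\to u(t)$ while their $A$-images $h^{-1}\bigl(\int_0^{t+h}v-\int_0^t v\bigr)\to v(t)$, and closedness of $A$ gives $u(t)\in D(A)$ with $Au(t)=v(t)$; that is, $R_{\alpha,\beta}(t)x\in D(A)$ and $AR_{\alpha,\beta}(t)x=R_{\alpha,\beta}(t)Ax$.

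For (ii) I would run the same machinery with $u(t):=(I_{0+}^\alpha R_{\alpha,\beta}(\cdot)x)(t)$ and $v(t):=R_{\alpha,\beta}(t)x-\frac{t^\beta}{\Gamma(\beta+1)}x$, now for arbitrary $x\in X$. Because the Laplace transform of $g_\alpha$ is $\lambda^{-\alpha}$ and that of $t^\beta/\Gamma(\beta+1)=g_{\beta+1}$ is $\lambda^{-\beta-1}$, the convolution theorem and (\ref{ris}) give $\widehat{u}(\lambda)=\lambda^{-\alpha}\lambda^{\alpha-\beta-1}(\lambda^\alpha-A)^{-1}x=\lambda^{-\beta-1}(\lambda^\alpha-A)^{-1}x\in D(A)$ and $\widehat{v}(\lambda)=\lambda^{\alpha-\beta-1}(\lambda^\alpha-A)^{-1}x-\lambda^{-\beta-1}x$. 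The algebraic resolvent identity $A(\lambda^\alpha-A)^{-1}=\lambda^\alpha(\lambda^\alpha-A)^{-1}-I$ shows precisely that $A\widehat{u}(\lambda)=\widehat{v}(\lambda)$, after which the same lemma and difference-quotient step yield $u(t)\in D(A)$ with $Au(t)=v(t)$, which is exactly the claim in (ii).

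Finally, the integral equation in (i) follows by combining the two. For $x\in D(A)$ the commutation just proved gives $R_{\alpha,\beta}(\cdot)Ax=AR_{\alpha,\beta}(\cdot)x$ pointwise, and since $A$ is closed it passes through the Bochner integral defining $I_{0+}^\alpha$, so $A(I_{0+}^\alpha R_{\alpha,\beta}(\cdot)x)(t)=(I_{0+}^\alpha R_{\alpha,\beta}(\cdot)Ax)(t)$; equating this with the expression from (ii) and rearranging produces $R_{\alpha,\beta}(t)x=\frac{t^\beta}{\Gamma(\beta+1)}x+(I_{0+}^\alpha R_{\alpha,\beta}(\cdot)Ax)(t)$. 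The one genuinely delicate point throughout is the transition from the frequency-domain identity $A\widehat{u}=\widehat{v}$ to the pointwise time-domain identity $Au=v$: this cannot be done by naively interchanging $A$ with the Laplace integral (which would presuppose the membership $u(t)\in D(A)$ we are trying to prove), and is exactly where the uniqueness theorem for the vector-valued Laplace transform together with the closedness of $A$ must be invoked.
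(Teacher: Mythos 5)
The paper states this lemma without proof, simply citing \cite[Proposition 3.10]{CL}, so there is no in-text argument to compare against; your proof is correct and follows the standard route one would expect in that reference: derive the frequency-domain identity $A\widehat{u}(\lambda)=\widehat{v}(\lambda)$ from the resolvent algebra, pass to $A\int_0^t u=\int_0^t v$ via the closed-operator Laplace lemma of \cite{ABHN}, and finish with the difference-quotient/closedness step (plus Hille's theorem to push $A$ through the Bochner integral for the resolvent equation). You also correctly flag the one point where a naive interchange of $A$ with the Laplace integral would be circular, so there is nothing to add.
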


The following result is obtained by an standard argument from Lemma \ref{relation_g_family} and so we omit the proof.
\begin{lemma}
\label{Resolvent_Laplace}Assume that $A$ is the generator of an $(\alpha,\beta)$-ROF $R_{\alpha,\beta }$ verifying  (\ref{eb}). Then for all complex number $\lambda$ with $\text{Re}\lambda>w $ we have  $\lambda ^{\alpha }\in \rho (A)$ and (\ref{ris}) holds.
\end{lemma}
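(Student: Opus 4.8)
The plan is to fix $\lambda\in\mathbb{C}$ with $\operatorname{Re}\lambda>w$ and show directly that the operator
\[
L_\lambda x := \int_0^\infty e^{-\lambda t} R_{\alpha,\beta}(t)x\, dt \qquad (x\in X)
\]
is bounded and coincides with $\lambda^{\alpha-\beta-1}(\lambda^\alpha-A)^{-1}$; this simultaneously yields $\lambda^\alpha\in\rho(A)$ and the representation (\ref{ris}). Convergence of the integral and the estimate $\|L_\lambda\|\le M/(\operatorname{Re}\lambda-w)$ follow at once from (\ref{eb}), since $\|e^{-\lambda t}R_{\alpha,\beta}(t)x\|\le M e^{-(\operatorname{Re}\lambda-w)t}\|x\|$. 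Note that the defining identity (\ref{ris}) is assumed only for real $\lambda>\omega$, so the whole point is to upgrade it to the complex half-plane while extracting resolvent information.

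First I would establish surjectivity of $\lambda^\alpha-A$ together with $L_\lambda x\in D(A)$ for every $x\in X$, using Lemma \ref{relation_g_family}(ii). By the convolution theorem for the Laplace transform and $\widehat{g_\alpha}(\lambda)=\lambda^{-\alpha}$, the Laplace transform of $t\mapsto(I_{0+}^\alpha R_{\alpha,\beta}(\cdot)x)(t)$ equals $\lambda^{-\alpha}L_\lambda x$. Lemma \ref{relation_g_family}(ii) tells us that each value $(I_{0+}^\alpha R_{\alpha,\beta}(\cdot)x)(t)$ lies in $D(A)$ and that its image under $A$ is $R_{\alpha,\beta}(t)x-\tfrac{t^\beta}{\Gamma(\beta+1)}x$, which is exponentially bounded. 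Since $A$ is closed and both the $D(A)$-valued integrand and its image under $A$ are Bochner integrable against $e^{-\lambda t}$, one may pull $A$ through the integral to obtain $\lambda^{-\alpha}L_\lambda x\in D(A)$ and
\[
A\bigl(\lambda^{-\alpha}L_\lambda x\bigr) = L_\lambda x - \lambda^{-\beta-1}x \qquad (x\in X),
\]
where I used $\int_0^\infty e^{-\lambda t}\tfrac{t^\beta}{\Gamma(\beta+1)}\,dt=\lambda^{-\beta-1}$. Rearranging gives $(\lambda^\alpha-A)L_\lambda x=\lambda^{\alpha-\beta-1}x$ for all $x\in X$, so $\lambda^{\beta+1-\alpha}L_\lambda$ is a bounded right inverse of $\lambda^\alpha-A$.

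Next I would produce the matching left inverse on $D(A)$. For $x\in D(A)$, Lemma \ref{relation_g_family}(i) gives the commutation $A R_{\alpha,\beta}(t)x=R_{\alpha,\beta}(t)Ax$, and closedness of $A$ then yields $A L_\lambda x=L_\lambda Ax$. Combining this with the identity from the previous step,
\[
L_\lambda(\lambda^\alpha-A)x = \lambda^\alpha L_\lambda x - L_\lambda A x = (\lambda^\alpha-A)L_\lambda x = \lambda^{\alpha-\beta-1}x \qquad (x\in D(A)).
\]
Thus $\lambda^{\beta+1-\alpha}L_\lambda$ is also a left inverse, so $\lambda^\alpha-A$ is bijective with bounded inverse $(\lambda^\alpha-A)^{-1}=\lambda^{\beta+1-\alpha}L_\lambda$. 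Hence $\lambda^\alpha\in\rho(A)$ and $L_\lambda=\lambda^{\alpha-\beta-1}(\lambda^\alpha-A)^{-1}$, which is exactly (\ref{ris}) for every complex $\lambda$ with $\operatorname{Re}\lambda>w$.

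The one step requiring genuine care — the main obstacle in an otherwise routine argument — is the interchange of the closed, generally unbounded operator $A$ with the Laplace integral. This rests on the standard fact that if $A$ is closed, $f(t)\in D(A)$ for all $t\ge0$, and both $f$ and $Af$ are Bochner integrable on $[0,\infty)$, then $\int_0^\infty f(t)\,dt\in D(A)$ with $A\int_0^\infty f(t)\,dt=\int_0^\infty Af(t)\,dt$. Verifying its hypotheses is precisely where Lemma \ref{relation_g_family} and the exponential bound (\ref{eb}) enter; everything else — the convolution identity and the elementary Laplace transforms of $g_\alpha$ and $g_{\beta+1}$ — is bookkeeping, which is why the statement is recorded as following from a standard argument.
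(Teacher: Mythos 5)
Your proof is correct and is precisely the ``standard argument from Lemma \ref{relation_g_family}'' that the paper invokes and omits: you use part (ii) of that lemma plus closedness of $A$ to get the right-inverse identity $(\lambda^\alpha-A)L_\lambda x=\lambda^{\alpha-\beta-1}x$, and part (i) to get the left inverse on $D(A)$, which together yield $\lambda^\alpha\in\rho(A)$ and (\ref{ris}). Nothing further is needed.
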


Reasoning as in \cite[Theorem 2.6]{B}, taking into account Lemma \ref{Resolvent_Laplace}  and  the value of the Laplace transform of power function, $\int_0^\infty e^{-\lambda t}t^\beta\, dt=\Gamma(\beta+1)\lambda^{-(\beta+1)},$ we obtain the following generalization to integrated fractional resolvent operator functions of the fact that if $A$ generates an exponentially bounded $\alpha$-ROF for $\alpha>2,$ then $A\in B(X).$ 

\begin{proposition}Let $\alpha>2$, $\beta\geq 0.$ Assume that $A$ is densely defined and that it is the generator of an $(\alpha,\beta)$-ROF $R_{\alpha,\beta}$ satisfying
\begin{equation}\label{ed}
   \| R_{\alpha,\beta}(t) \| \le C t^{\beta} \text{e}^{\omega t} \quad  (t \ge 0)
\end{equation}
for some constants $C,\, \omega \geq 0.$  Then $A\in B(X).$
\end{proposition}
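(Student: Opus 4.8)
The plan is to convert the polynomial-times-exponential bound (\ref{ed}) into a resolvent estimate through the Laplace transform, and then to use the fact that $\alpha>2$ forces the map $\lambda\mapsto\lambda^{\alpha}$ to wrap a shifted right half-plane around a full punctured neighbourhood of infinity, so that the resolvent of $A$ decays like $|\mu|^{-1}$ there; boundedness of $A$ then drops out of a Cauchy integral. First I would apply Lemma \ref{Resolvent_Laplace}: the bound $\|R_{\alpha,\beta}(t)\|\le Ct^{\beta}\text{e}^{\omega t}$ is in particular exponential, so $\lambda^{\alpha}\in\rho(A)$ and (\ref{ris}) holds for all $\text{Re}\,\lambda>\omega$. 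Estimating the Laplace integral in (\ref{ris}) directly with (\ref{ed}) and the transform $\int_0^{\infty}e^{-\lambda t}t^{\beta}\,dt=\Gamma(\beta+1)\lambda^{-(\beta+1)}$ then yields
$$\|(\lambda^{\alpha}-A)^{-1}\|\le C\,\Gamma(\beta+1)\,|\lambda|^{\beta+1-\alpha}\,(\text{Re}\,\lambda-\omega)^{-(\beta+1)}\qquad(\text{Re}\,\lambda>\omega).$$

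The decisive step exploits $\alpha>2$, i.e. $\pi/\alpha<\pi/2$. Writing $\lambda=re^{i\theta}$ with $|\theta|\le\pi/\alpha$ keeps $\cos\theta\ge\cos(\pi/\alpha)>0$, so $\text{Re}\,\lambda=r\cos\theta$ exceeds $\omega$ once $r$ is large, while $\mu:=\lambda^{\alpha}=r^{\alpha}e^{i\alpha\theta}$ has argument $\alpha\theta$ sweeping the whole interval $[-\pi,\pi]$ as $\theta$ runs over $[-\pi/\alpha,\pi/\alpha]$. Hence every $\mu$ of large modulus, in \emph{every} direction, equals $\lambda^{\alpha}$ for such an admissible $\lambda$; substituting $r=|\mu|^{1/\alpha}$ into the estimate above and absorbing the $-\omega$ shift for $|\mu|$ large converts it into
$$\|(\mu-A)^{-1}\|\le C''\,|\mu|^{-1}\qquad(|\mu|>R_{0})$$
for suitable $C''$ and $R_{0}$. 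Thus $\{\mu:|\mu|>R_{0}\}\subset\rho(A)$, the resolvent is $O(|\mu|^{-1})$ there, and $\sigma(A)$ is bounded.

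It remains to deduce $A\in B(X)$ from this neighbourhood-of-infinity estimate, which is where the only real subtlety lies. On $\{|\mu|>R_{0}\}$ the resolvent is analytic and vanishes at infinity, so for any $R'>R_{0}$ the operator $Q:=\frac{1}{2\pi i}\int_{|\mu|=R'}\mu(\mu-A)^{-1}\,d\mu$ lies in $B(X)$. For $x\in D(A)$ I would use the identity $\mu(\mu-A)^{-1}x=x+(\mu-A)^{-1}Ax$ together with $\int_{|\mu|=R'}d\mu=0$ to reduce $Qx$ to $P(Ax)$, where $P:=\frac{1}{2\pi i}\int_{|\mu|=R'}(\mu-A)^{-1}\,d\mu$. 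The operator $P$ is exactly the $\mu^{-1}$-coefficient in the Laurent expansion of the resolvent at infinity, namely $P=\lim_{|\mu|\to\infty}\mu(\mu-A)^{-1}$; and this limit is $I$, since $A(\mu-A)^{-1}x=(\mu-A)^{-1}Ax\to0$ on the dense set $D(A)$ (with the uniform bound just obtained) forces $\mu(\mu-A)^{-1}\to I$ strongly. Therefore $Qx=Ax$ for all $x\in D(A)$, and since $A$ is closed, densely defined, and coincides with the bounded operator $Q$ on $D(A)$, it must equal $Q$ on all of $X$, so $A=Q\in B(X)$. The main obstacle is conceptual rather than computational: recognising that $\alpha>2$ turns the sectorial-type resolvent bound into genuine decay in an \emph{entire} neighbourhood of infinity, and then correctly evaluating the Cauchy integral to get $P=I$, which is what identifies the a priori unbounded $A$ with the bounded operator $Q$.
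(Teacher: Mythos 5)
Your argument is correct and follows essentially the same route the paper indicates: it uses Lemma \ref{Resolvent_Laplace} together with the Laplace transform of $t^{\beta}$ to get the resolvent bound $\|(\lambda^{\alpha}-A)^{-1}\|\le C\Gamma(\beta+1)|\lambda|^{\beta+1-\alpha}(\mathrm{Re}\,\lambda-\omega)^{-(\beta+1)}$, and then the observation that for $\alpha>2$ the sector $|\arg\lambda|\le\pi/\alpha$ maps onto a full neighbourhood of infinity, yielding $\|(\mu-A)^{-1}\|=O(|\mu|^{-1})$ there (note the powers of $\beta$ cancel exactly), which is precisely the adaptation of Bajlekova's Theorem 2.6 that the paper invokes. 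Your Cauchy-integral closing step, identifying the leading Laurent coefficient of the resolvent at infinity with $I$ via strong convergence on the dense domain, is a correct and complete way to finish.
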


In the following we shall use the classical Mittag-Leffler functions (\cite{KST}), $E_{\alpha,\beta}(z),$ defined by
\begin{equation*}
  E_{\alpha,\beta}(z)=\sum_{k=0}^\infty\frac{z^k}{\Gamma(\alpha k+\beta)}\quad (z\in\mathbb{C}; \alpha,\beta>0).
\end{equation*}
Let $0<\alpha \leq2$ and $\beta\geq 0.$  It is well known that $R_\alpha(t):=E_{\alpha,1}(-t^\alpha) \,I$ is the $\alpha$-ROF generated by $A=-I.$ Then
\begin{equation*}
R_{\alpha,\beta}(t):=I_{0+}^\beta R_\alpha(t)=t^\beta E_{\alpha,\beta+1}(-t^\alpha) \,I,
\end{equation*}
is the $(\alpha,\beta)$-ROF generated by $A.$ From the asymptotic behaviour of Mittag-Leffler functions, we have
\[
   \| R_{\alpha,\beta}(t) \| =   \begin{cases}
                                    O( t^{\beta-\alpha}) & \text{if } \beta+1\ne \alpha,\; \alpha< 2\\
                                    O( t^{-\alpha-1} ) &   \text{if }1<\beta+1=\alpha< 2\\
                                    O( 1+t^{\beta-2} ) & \text{if } \alpha= 2,
                                \end{cases}
\]
as $t \rightarrow \infty.$ In the following proposition we show that such decay rate is optimal for $(\alpha, \beta)$-ROF
when  $\alpha < 2$ and $(\alpha, \beta)\not=(1,0)$.  In the case of $(\alpha, \beta)=(1,0)$, it is well-known that
a $C_0$-semigroup can decay exponentially.

\begin{proposition}\label{asymp}
Let $0<\alpha<2.$ Let $\beta\geq 0$ for $\alpha\neq 1,$ and $\beta> 0$ when $\alpha=1.$ Assume that $A$ generates an $(\alpha,\beta)$-ROF $R_{\alpha,\beta}$ such that
 \[
   \| R_{\alpha,\beta}(t) \| =   \begin{cases}
                                    O( t^{\beta-\alpha-\varepsilon}) &  \text{if } \beta+1 \neq \alpha \\
                                    O( t^{-\alpha-1-\varepsilon} ) &  \text{if } \beta+1=\alpha,
                                \end{cases}
\]
for some small enough $\varepsilon>0,$ as $t \rightarrow \infty.$ Then $X=\{0\}.$
\end{proposition}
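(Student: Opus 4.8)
The plan is to transfer everything to the resolvent of $A$ through the defining representation (\ref{ris}) and to study $(\lambda^\alpha-A)^{-1}$ as $\lambda\to0^+$, splitting according to the sign of $\beta+1-\alpha$. Writing $F(\lambda)x:=\int_0^\infty e^{-\lambda t}R_{\alpha,\beta}(t)x\,dt=\lambda^{\alpha-\beta-1}(\lambda^\alpha-A)^{-1}x$, the assumed decay makes this integral converge for all $\mathrm{Re}\,\lambda>0$, so $(\lambda^\alpha-A)^{-1}x=\lambda^{\beta+1-\alpha}F(\lambda)x$ is available down to the origin. A preliminary remark, used throughout, is that $A$ is injective: if $Ax=0$ then Lemma \ref{relation_g_family}(i) gives $R_{\alpha,\beta}(t)x=\tfrac{t^\beta}{\Gamma(\beta+1)}x$, which grows like $t^\beta$ and contradicts the assumed decay unless $x=0$. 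Since the decay hypothesis for one exponent implies it for every smaller one, I may shrink $\varepsilon$ freely.

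First suppose $\beta+1>\alpha$ (this covers $\alpha=1$). Then the decay exponent $\sigma:=\beta-\alpha-\varepsilon$ is $>-1$ for small $\varepsilon$, and an elementary Abelian estimate gives $\|F(\lambda)x\|=O(\lambda^{-\sigma-1})=O(\lambda^{\alpha-\beta-1+\varepsilon})$, whence $(\lambda^\alpha-A)^{-1}x=\lambda^{\beta+1-\alpha}F(\lambda)x=O(\lambda^\varepsilon)\to0$. When $\beta+1<\alpha$ (forcing $\alpha>1$) the map $R_{\alpha,\beta}(\cdot)x$ is integrable; here I would first show $F(0)x=\int_0^\infty R_{\alpha,\beta}(t)x\,dt=0$ by applying $A$ to $v(t):=\Gamma(\alpha)t^{1-\alpha}(I_{0+}^\alpha R_{\alpha,\beta}(\cdot)x)(t)$, observing by dominated convergence that $v(t)\to F(0)x$ while $Av(t)=\Gamma(\alpha)t^{1-\alpha}\bigl(R_{\alpha,\beta}(t)x-\tfrac{t^\beta}{\Gamma(\beta+1)}x\bigr)\to0$ by Lemma \ref{relation_g_family}(ii), so that closedness and injectivity give $F(0)x=0$; an incomplete–Gamma estimate of $\int_0^\infty(e^{-\lambda t}-1)R_{\alpha,\beta}(t)x\,dt$ then again yields $\|F(\lambda)x\|=O(\lambda^{\alpha-\beta-1+\varepsilon})$ and $(\lambda^\alpha-A)^{-1}x\to0$. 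In both subcases the endgame is identical: from $\mu(\mu-A)^{-1}x=x+A(\mu-A)^{-1}x$ with $\mu=\lambda^\alpha$ one obtains $(\mu-A)^{-1}x\to0$ and $A(\mu-A)^{-1}x\to-x$, and closedness of $A$ forces $x=0$; as $x$ is arbitrary, $X=\{0\}$.

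The remaining and genuinely different case is the boundary $\beta+1=\alpha$ (again $\alpha>1$, $\beta=\alpha-1>0$), where $\lambda^{\beta+1-\alpha}\equiv1$ and $(\lambda^\alpha-A)^{-1}x=F(\lambda)x\to F(0)x$. Running the $v(t)$ argument with the orders now balanced gives $AF(0)x=-x$ for every $x$, so $A$ is also surjective and, being injective, satisfies $0\in\rho(A)$ with $A^{-1}=-F(0)$. I would then use the exact identity $F(\lambda)x-F(0)x=\lambda^\alpha(\lambda^\alpha-A)^{-1}A^{-1}x$, whose right-hand side equals $-\lambda^\alpha A^{-2}x+O(\lambda^{2\alpha})$; in particular it carries no linear term, which forces the first moment $\int_0^\infty tR_{\alpha,\beta}(t)x\,dt$ to vanish. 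On the other hand the decay $\|R_{\alpha,\beta}(t)x\|=O(t^{-\alpha-1-\varepsilon})$, after subtracting this (now vanishing) linear term, bounds $\|F(\lambda)x-F(0)x\|$ by $O(\lambda^{\alpha+\varepsilon})=o(\lambda^\alpha)$, via the two truncated moment estimates $\int_0^{1/\lambda}t^2\|R_{\alpha,\beta}(t)x\|\,dt$ and $\int_{1/\lambda}^\infty t\|R_{\alpha,\beta}(t)x\|\,dt$. Comparing with $\|F(\lambda)x-F(0)x\|\sim\lambda^\alpha\|A^{-2}x\|$ yields $A^{-2}x=0$, hence $x=0$ and $X=\{0\}$.

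The main obstacle is exactly this boundary case: one must establish $0\in\rho(A)$ cleanly and then isolate the fractional power $\lambda^\alpha$ in the expansion of $F(\lambda)x-F(0)x$ against the competing analytic (linear) term, so the truncated–moment estimates have to be carried out with care to get the sharp rate $o(\lambda^\alpha)$. By contrast, once integrability of $R_{\alpha,\beta}(\cdot)x$ and the value of $F(0)x$ are pinned down, the Abelian and incomplete–Gamma estimates in the non-boundary cases are routine, and the closed-graph endgame is immediate.
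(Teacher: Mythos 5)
Your argument is correct, and in the two harder cases it follows a genuinely different route from the paper's. Both proofs start from $h(\lambda)x=\int_0^\infty e^{-\lambda t}R_{\alpha,\beta}(t)x\,dt=\lambda^{\alpha-\beta-1}(\lambda^\alpha-A)^{-1}x$ and aim to show that a power of the resolvent tends to $0$ as $\lambda\to0^+$, but the case split and the mechanism differ. The paper distinguishes $\beta>\alpha$; $\beta\le\alpha$ with $\beta+1\ne\alpha$; and $\beta+1=\alpha$, and in the latter two it differentiates $h$ once, respectively twice, so that the resulting resolvent identities isolate $(\lambda^\alpha-A)^{-1}x$, respectively $(\lambda^\alpha-A)^{-2}x$, as a combination of $h'$ (or $h''$) and higher resolvent powers that can be bounded by $O(\lambda^{\varepsilon})$ directly from the decay hypothesis; it never needs to identify $\lim_{\lambda\to0}h(\lambda)$ or discuss $\rho(A)$ near the origin. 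You instead split on the sign of $\beta+1-\alpha$ and work at $\lambda=0$: when $\beta+1>\alpha$ a one-line Abelian bound already does the job (this subsumes the paper's case (i) and part of (ii) more cheaply, with the same closed-graph endgame); when $\beta+1<\alpha$ you first prove $h(0)x=0$ by applying the closed, injective $A$ to $\Gamma(\alpha)t^{1-\alpha}(I_{0+}^{\alpha}R_{\alpha,\beta}(\cdot)x)(t)$ via Lemma \ref{relation_g_family}(ii); and in the critical case $\beta+1=\alpha$ you establish $0\in\rho(A)$ with $A^{-1}=-h(0)$ and then match the $\lambda$- and $\lambda^{\alpha}$-coefficients of $h(\lambda)x-h(0)x$ against the resolvent expansion to force first $\int_0^\infty tR_{\alpha,\beta}(t)x\,dt=0$ and then $A^{-2}x=0$. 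The preliminary injectivity observation you need (from Lemma \ref{relation_g_family}(i)) is correct and costs nothing. Your approach buys extra structural information --- the zeroth moment of $R_{\alpha,\beta}$ is $-A^{-1}$ and the first moment vanishes in the critical case --- at the price of the truncated-moment bookkeeping needed to reach the rate $o(\lambda^{\alpha})$; the paper's differentiation trick is more mechanical and uniform, since all estimates stay on $h$, $h'$, $h''$ for $\lambda>0$. Both arguments use $\alpha>1$ in exactly the same places.
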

\begin{proof}
Let $x\in X$. Define
\begin{equation*}
h(\lambda)=\int_0^\infty  e^{-t\lambda } R_{\alpha,\beta}(t)x \, dt=\lambda^{\alpha-\beta-1}(\lambda^\alpha-A)^{-1}x \quad (\lambda>0).
\end{equation*}
Now we distingue three cases.

(i) $\beta>\alpha.$ Suppose $\| R_{\alpha,\beta}(t) \| =O( t^{\beta-\alpha-\varepsilon})$ as $t \rightarrow \infty$ for some $0<\varepsilon<\beta-\alpha.$ Then there exists a constant $M>0$ such that
\begin{equation*}
  \| R_{\alpha,\beta}(t)\| \leq M(1+t^{\beta-\alpha-\varepsilon}) \quad (t\geq0).
\end{equation*}
Hence,
\begin{equation*}
  \|(\lambda^\alpha-A)^{-1}x\|\leq c\lambda^{\beta+1-\alpha}(\lambda^{-1}+\lambda^{\alpha+\varepsilon-\beta-1})=c(\lambda^{\beta-\alpha}+\lambda^{\varepsilon}),
\end{equation*}
for a suitable constant $c>0$, which shows that
\begin{equation*}
      \lim_{\lambda\rightarrow 0}(\lambda^\alpha  -A)^{-1}x=0.
\end{equation*}
Consequently, $D(A)=\{0\}$ for $x$ is arbitrary. As $\rho(A)\neq\emptyset,$ then $X=\{0\}.$

(ii) $\beta\leq \alpha$ with $\beta+1 \neq \alpha.$ Assume that $\| R_{\alpha,\beta}(t) \| =O( t^{\beta-\alpha-\varepsilon})$ as $t \rightarrow \infty$ for some $\varepsilon>0$ such that $\beta-\alpha-\varepsilon>-2.$  As we also have $\| R_{\alpha,\beta}(t) \| =O( t^{(\beta-\alpha-\varepsilon)/2})$ as $t \rightarrow \infty,$ we obtain that $\lambda^{(2+\beta-\alpha-\varepsilon)/2} h(\lambda)$ and $\lambda^{2+\beta-\alpha-\varepsilon} h'(\lambda)$ are bounded in $\lambda>0$. Therefore, from the identity
\begin{equation*}
 h'(\lambda)=(\alpha-\beta-1) \lambda^{\alpha-\beta-2}(\lambda^\alpha-A)^{-1}x-\alpha \lambda^{2(\alpha-1)-\beta}(\lambda^\alpha-A)^{-2}x,
\end{equation*}
we have
\begin{eqnarray*}
         \|(\alpha-1-\beta)(\lambda^\alpha-A)^{-1}x\|&=& \lambda^{2+\beta-\alpha}\|h'(\lambda)+\alpha \lambda^{2(\alpha-1)-\beta}(\lambda^\alpha-A)^{-2}x\| \\
         &\leq& c \lambda^{\varepsilon}(1+\lambda^{\beta}),
\end{eqnarray*}
for a suitable constant $c>0$. This implies as in case (i) that $X=\{0\}.$

(iii) $\beta+1 =\alpha.$ Suppose $\| R_{\alpha,\beta}(t) \| =O( t^{-\alpha-1-\varepsilon})$ as $t \rightarrow \infty,$  for some $\varepsilon>0$ verifying that $\alpha+\varepsilon<2.$ Then $\lambda^{(2-\alpha-\varepsilon)/3} h(\lambda)$ and $\lambda^{2-\alpha-\varepsilon} h''(\lambda)$ are bounded in $\lambda>0$. Since now
\begin{equation*}
      h''(\lambda)=-\alpha (\alpha-1) \lambda^{\alpha-2}(\lambda^\alpha-A)^{-2}x+2 \alpha^2 \lambda^{2(\alpha-1)}(\lambda^\alpha-A)^{-3}x,
\end{equation*}
we get
\begin{eqnarray*}
         \|\alpha (\alpha-1)(\lambda^\alpha-A)^{-2}x\|&=& \lambda^{2-\alpha}\|2 \alpha^2 \lambda^{2(\alpha-1)}(\lambda^\alpha-A)^{-3}x-h''(\lambda)\| \\
         &\leq& c \lambda^{\varepsilon}(\lambda^{2(\alpha-1)}+1),
      \end{eqnarray*}
for a suitable constant $c>0$. As $\alpha>1,$ we obtain that $D(A^2)=\{0\},$ which yields $X=\{0\}.$
\end{proof}

It is a well-known fact that a cosine operator function necessarily has non-negative exponential type (\cite[Proposition 3.14.6]{ABHN}). We have the following result for integrated cosine operator functions.

\begin{proposition}\label{asympc}
Let $\beta>0.$ Assume that $A$ generates a $(2,\beta)$-ROF $R_{2,\beta}$ such that
 \[
   \| R_{2,\beta}(t) \| =   \begin{cases}
                                    O( t^{\beta-2-\varepsilon}) &  \text{if } \beta\neq 1\\
                                    O( t^{-\varepsilon} ) &  \text{if } \beta=1,
                             \end{cases}
\]
for some small enough $\varepsilon>0,$ as $t \rightarrow \infty.$ Then $X=\{0\}.$
\end{proposition}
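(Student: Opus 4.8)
The plan is to run the same Laplace-transform scheme as in the proof of Proposition \ref{asymp}, now specialised to $\alpha=2$, treating the resonant value $\beta=1$ separately. Fix $x\in X$. Since the decay hypotheses allow $\omega=0$ in the definition, Lemma \ref{Resolvent_Laplace} gives that (\ref{ris}) holds for all $\operatorname{Re}\lambda>0$, so I would set
\[
h(\lambda)=\int_0^\infty e^{-\lambda t}R_{2,\beta}(t)x\,dt=\lambda^{1-\beta}(\lambda^2-A)^{-1}x\qquad(\operatorname{Re}\lambda>0).
\]
Splitting the defining integral over $[0,1]$ (where $R_{2,\beta}$ is bounded by strong continuity) and over $[1,\infty)$ (where the hypothesis gives the algebraic bound), the decay rates translate into growth bounds for $h,h',h''$, and for the operator norm $\|(\lambda^2-A)^{-1}\|_{B(X)}\le\lambda^{\beta-1}\int_0^\infty e^{-\lambda t}\|R_{2,\beta}(t)\|\,dt$, as $\lambda\to0^+$. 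As in Proposition \ref{asymp}, the aim is to force a resolvent power of $A$ applied to $x$ to vanish in the limit $\lambda\to0^+$.

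For $\beta\neq 1$ I would follow cases (i)--(ii) of Proposition \ref{asymp}. Differentiating gives
\[
h'(\lambda)=(1-\beta)\lambda^{-\beta}(\lambda^2-A)^{-1}x-2\lambda^{2-\beta}(\lambda^2-A)^{-2}x,
\]
so that, after multiplying by $\lambda^{\beta}$,
\[
(1-\beta)(\lambda^2-A)^{-1}x=\lambda^{\beta}h'(\lambda)+2\lambda^{2}(\lambda^2-A)^{-2}x .
\]
The first term on the right is $O(\lambda^{\varepsilon})$ and the second is a positive power of $\lambda$ (via the operator-norm bound for $(\lambda^2-A)^{-1}$ together with $\|(\lambda^2-A)^{-1}x\|=\lambda^{\beta-1}\|h(\lambda)\|$), so both tend to $0$. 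Since the coefficient $1-\beta$ is nonzero exactly when $\beta\neq 1$, this yields $(\lambda^2-A)^{-1}x\to0$. Feeding this into the identity $\lambda^2(\lambda^2-A)^{-1}y-(\lambda^2-A)^{-1}Ay=y$ for $y\in D(A)$ gives $D(A)=\{0\}$, and since $\rho(A)\neq\emptyset$ the injectivity of $(\mu-A)^{-1}$ forces $X=\{0\}$. For large $\beta$ one can in fact read off $(\lambda^2-A)^{-1}x=O(\lambda^{\varepsilon})$ directly from $h$, without differentiating.

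The case $\beta=1$ is where I expect the genuine difficulty, and it departs from Proposition \ref{asymp}. Here $1-\beta=0$, and the second-derivative device of case (iii) also collapses, because the compensating factor $\lambda^{2-\alpha}$ used there degenerates to $\lambda^{0}$ at $\alpha=2$; indeed one checks that no power $(\lambda^2-A)^{-k}x$ tends to $0$ along the positive real axis, so the real-axis estimates cannot close the argument. The natural route is to pass to the imaginary axis, using that $R_{2,1}$ is the odd sine family $S$ with $\|S(t)\|=O(t^{-\varepsilon})$ as $|t|\to\infty$. The even map $\lambda\mapsto(\lambda^2-A)^{-1}x$ is analytic on $\mathbb{C}\setminus i\mathbb{R}$ and equals $\int_0^\infty e^{\mp\lambda t}S(t)x\,dt$ on $\operatorname{Re}\lambda\gtrless0$. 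I would argue that the decay of $S$ makes these representations converge up to the boundary, producing at each $is$ ($s\neq0$) a bounded limit; since $A$ is closed, this places $-s^2$ in $\rho(A)$, so $\sigma(A)\subseteq\{0\}$. Analyticity across $is$ then equates the two boundary values, whose difference is $2i\int_0^\infty\sin(st)S(t)x\,dt$; this vanishes for all $s>0$, so Fourier-sine uniqueness gives $S\equiv0$, whence $(\lambda^2-A)^{-1}x=\int_0^\infty e^{-\lambda t}S(t)x\,dt=0$ and $x=0$.

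The main obstacle is exactly this boundary step: the operator norm $\|(\lambda^2-A)^{-1}\|_{B(X)}$ blows up like $\lambda^{\varepsilon-1}$ as $\operatorname{Re}\lambda\to0^+$, so existence and boundedness of the boundary values $\int_0^\infty e^{-ist}S(t)x\,dt$ must come from oscillatory cancellation rather than absolute integrability. Making this rigorous from $\|S(t)\|=O(t^{-\varepsilon})$ alone will need extra input — for instance an integration by parts using the cosine family $C=S'$, a Tauberian argument, or a direct appeal to the non-negative-type rigidity for cosine operator functions (cf. \cite[Proposition 3.14.6]{ABHN}) that already motivates the statement. This is the step I expect to absorb most of the work.
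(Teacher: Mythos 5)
Your treatment of $\beta\neq 1$ is sound and is exactly what the paper does: the case $\beta>2$ reduces to case (i) of Proposition \ref{asymp}, and the case $\beta\le 2$, $\beta\neq1$ to case (ii). The problem is the case $\beta=1$, which you correctly identify as the one where the real-axis Laplace-transform estimates degenerate, but your proposed substitute is not a proof: everything hinges on the existence and boundedness of the boundary values $\int_0^\infty e^{-ist}S(t)x\,dt$, and you explicitly leave that step open. With only $\|S(t)\|=O(t^{-\varepsilon})$ for small $\varepsilon>0$ the integral is not absolutely convergent; the natural way to produce the boundary value, namely $\lim_{\mu\to0^+}((\mu+is)^2-A)^{-1}x$, is circular, since the boundedness of that limit is exactly what you need in order to place $-s^2$ in $\rho(A)$; and integrating by parts brings in the cosine function $C=S'$, about which the hypotheses say nothing quantitative. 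So as written the $\beta=1$ case is a plan with its hardest step missing, not an argument.

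The paper closes this case with a short, purely elementary device absent from your proposal: the d'Alembert-type functional equation for sine operator functions (\cite{LSh}),
\begin{equation*}
2R_{2,1}(t)R_{2,1}(s)x=\int_{t-s}^{t+s}R_{2,1}(\tau)x\,d\tau\qquad(t\ge s\ge0,\ x\in X).
\end{equation*}
Taking $s=t$ and then replacing $t$ by $t/2$ gives
$\bigl\|\int_0^t R_{2,1}(\tau)x\,d\tau\bigr\|=2\|R_{2,1}(t/2)^2x\|\le c\|x\|\,t^{-2\varepsilon}$,
so the $(2,2)$-ROF $\int_0^{\cdot}R_{2,1}(\tau)\,d\tau$ decays like $O(t^{2-2-2\varepsilon})$ at infinity, which is precisely the already-settled situation $\beta=2\neq1$; hence $X=\{0\}$. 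If you want to keep your own structure you must either import this functional-equation trick or supply a genuine Tauberian/rigidity argument (e.g.\ via the non-negative exponential type of cosine functions) to justify the boundary-value step; without one of these, the $\beta=1$ case remains unproved.
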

\begin{proof} The case $\beta>2$ runs as in (i) of Proposition \ref{asymp}. When $\beta\leq 2$ and $\beta\neq1,$ one can reason as in (ii) of Proposition \ref{asymp}.

Suppose that $\beta=1.$ It follows from \cite{LSh} that
 \begin{equation}\label{fe_sine}
  2R_{2,1}(t)R_{2,1}(s)x=\int_{t-s}^{t+s}R_{2,1}(\tau)x\, d\tau \quad (t\ge s \ge 0, x\in X).
\end{equation}
Assume that $\| R_{2,1}(t) \| =  O( t^{-\varepsilon})$ as $t\rightarrow\infty$, for some $0<\varepsilon<1.$ By (\ref{fe_sine}) we have
\begin{equation}
\|\int_{0}^{t}R_{2,1}(s)x\, ds\| =2\|R_{2,1}(\frac{t}{2})^2x\|\leq c \|x\|t^{-2\varepsilon} \quad (t>0, x\in X),
\end{equation}
for a suitable constant $c>0.$ As $\int_{0}^{t}R_{2,1}(s)\, ds$ is a $(2,2)$-ROF, this yields that $X=\{0\}.$
\end{proof}

\begin{remark}\label{rasymp}
  From Proposition \ref{asymp} and Proposition \ref{asympc} it follows that if $R_{\alpha,\beta}$ is an $(\alpha,\beta)$-ROF, $0<\alpha \leq2$ and $\beta>\alpha,$ then $R_{\alpha,\beta}(t)$ is not uniformly bounded, that is, there exists a sequence  $\{t_n\}_{n=1}^\infty$ such that  $\lim_{n\rightarrow\infty}t_n=\infty$ and $\lim_{n\rightarrow\infty}\|R_{\alpha,\beta}(t_n)\|=\infty.$
\end{remark}

In \cite[Proposition 2.7]{LZ} it is shown that and $\alpha$-ROF is never exponentially stable when $0<\alpha<1$. As an straightforward consequence of our analysis we extend these result to $(\alpha,\beta)$-ROFs, by showing that only $C_0$-semigroups can decrease exponentially.

\begin{corollary}
Let $0< \alpha \le 2$, $\beta\geq 0.$ Suppose that $A$ is the generator of an $(\alpha,\beta)$-ROF $R_{\alpha,\beta}.$ Then $R_{\alpha,\beta}$  is never exponentially stable but for $(\alpha, \beta)=(1,0).$
\end{corollary}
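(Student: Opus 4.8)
The plan is to argue by contradiction and reduce the statement, case by case, to the decay estimates already established in Proposition~\ref{asymp} and Proposition~\ref{asympc}, handling the cosine operator function separately. Suppose $R_{\alpha,\beta}$ is exponentially stable, so that $\|R_{\alpha,\beta}(t)\| \le M\mathrm{e}^{-\omega t}$ for some constants $M \ge 1$ and $\omega > 0$. The elementary observation driving the whole argument is that exponential decay dominates every power of $t$: for each real $p$ and each $\varepsilon > 0$ one has $\mathrm{e}^{-\omega t} = O(t^{p})$ as $t \to \infty$. Hence an exponentially stable family automatically satisfies the polynomial decay hypotheses appearing in Proposition~\ref{asymp} and Proposition~\ref{asympc}, for any admissible small $\varepsilon > 0$.

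First I would dispose of the range $0 < \alpha < 2$. If $(\alpha,\beta) \ne (1,0)$, then $(\alpha,\beta)$ meets the standing assumptions of Proposition~\ref{asymp} ($\beta \ge 0$ for $\alpha \ne 1$, and $\beta > 0$ for $\alpha = 1$). By the observation above, exponential stability yields $\|R_{\alpha,\beta}(t)\| = O(t^{\beta-\alpha-\varepsilon})$ when $\beta + 1 \ne \alpha$ and $\|R_{\alpha,\beta}(t)\| = O(t^{-\alpha-1-\varepsilon})$ when $\beta + 1 = \alpha$, so Proposition~\ref{asymp} forces $X = \{0\}$; thus $R_{\alpha,\beta}$ can be exponentially stable only on the trivial space. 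The single excluded pair $(1,0)$ is exactly the $C_0$-semigroup case, where exponential stability is of course possible.

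It remains to treat $\alpha = 2$. When $\beta > 0$, exponential stability again supplies the decay hypotheses of Proposition~\ref{asympc} (namely $O(t^{\beta-2-\varepsilon})$ for $\beta \ne 1$ and $O(t^{-\varepsilon})$ for $\beta = 1$), whence $X = \{0\}$. The remaining pair $\alpha = 2$, $\beta = 0$ is a cosine operator function and is covered by neither proposition; this is where I expect the only genuine obstacle, since the argument must come from the algebraic structure rather than from a resolvent growth estimate. Here I would invoke the cosine functional equation in the form $2R_{2,0}(t)^2 = R_{2,0}(2t) + I$ (equivalently, the non-negative exponential type recalled just before Proposition~\ref{asympc}). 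Exponential stability gives $\|R_{2,0}(t)\| \to 0$, so for each $x \in X$ one has $\|R_{2,0}(2t)x + x\| = 2\|R_{2,0}(t)^2 x\| \le 2\|R_{2,0}(t)\|^2\|x\| \to 0$ while $R_{2,0}(2t)x \to 0$; letting $t \to \infty$ forces $\|x\| = 0$, so $X = \{0\}$ once more. Combining the three cases, exponential stability on a nontrivial space is possible only for $(\alpha,\beta) = (1,0)$, which is the claim.
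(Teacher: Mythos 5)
Your proof is correct and follows exactly the route the paper intends: the corollary is stated there as a ``straightforward consequence'' of Propositions~\ref{asymp} and~\ref{asympc} with no written proof, and your reduction of exponential stability to the polynomial decay hypotheses of those propositions is precisely that consequence. Your separate treatment of the $(2,0)$ case via the d'Alembert identity $2R_{2,0}(t)^2=R_{2,0}(2t)+I$ matches the fact the paper invokes just before Proposition~\ref{asympc}, namely that a cosine operator function has non-negative exponential type.
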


As the function $f(z)=z^{-1}$ preserves sectors, that is, $f(\Sigma _{\theta })=\Sigma _{\theta },$ it is natural to consider the inverse problem for generators of bounded fractional resolvent operator functions. In the more general setting of integrated fractional resolvent operator functions, the natural condition is generating the so called tempered operator functions (see Proposition \ref{mreg}).

\begin{definition}
Let $0<\alpha\leq 2,$ and $\beta\geq 0.$  Then an operator $A$ is called the generator of a tempered $(\alpha,\beta)$-ROF if $(0,+\infty) \subset \rho(A)$ and there exists a strongly  continuous mapping $R_{\alpha,\beta}:[0,+\infty) \rightarrow B(X)$  satisfying
\begin{equation}\label{tempered}
 \|R_{\alpha,\beta}(t)\|\leq M t^{\beta} \quad (t\geq 0),
\end{equation}
for a suitable constant $M\ge0,$ such that (\ref{ris}) is satisfied for all $\lambda >0.$
\end{definition}

In \cite[Example 6.10]{A} an example of once integrated semigroup whose generator is densely defined and which is not of $O(t)$ as $t\rightarrow 0$ is provided. In \cite[Theorem 4.4]{AK} it is shown that generating smooth distribution semigroups is equivalent to generating tempered integrated semigroups. A characterization of operators $iA$ generating tempered integrated groups, where $A$ generates a holomorphic semigroup of angle $\pi/2,$ can be found in \cite{M}. It is shown in \cite{H} that this optimal convergence rate is achieved for a large class of differential operators. We consider here the following situation.

 Let $A=\sum_{|\alpha|\leq m}c_\alpha D^\alpha$ be a linear differential operator of order $m>1$ with constant coefficients on one of the spaces $L^p(\mathbb{R}^n),$ $1\leq p<\infty$, or $C_0(\mathbb{R}^n),$ with maximal domain in the sense of the Fourier transform. Assume that the symbol of $A$, defined as $P(D)=\sum_{|\alpha|\leq m}c_\alpha (i\xi)^\alpha$, is of the form $ia(\xi)$ where $a:\mathbb{R}^n \mapsto \mathbb{R}$ is an homogeneous polynomial which is elliptic ($|a(\xi)|\geq C |\xi|^m $ for all $\xi \in \mathbb{R}^n$ with $|\xi|>L$). Note that $A$ is an injective densely defined operator. Moreover $A$ has dense range but for $p=1$. It is interesting to observe that in the particular case of $a(\xi)=-|\xi|^2$ we obtain the Schr\"{o}dinger operator $i\Delta.$ The following result is taken from \cite[Theorem 4.2]{H}.

\begin{lemma} \label{do} The operator $A$ on  $L^p(\mathbb{R}^n),$ $1< p<\infty,$ $p\neq2,$ (respectively, $L^1(\mathbb{R}^n)$  or $C_0(\mathbb{R}^n)$) is the generator of a tempered $\beta$-times integrated semigroup for all $\beta \geq n|\frac{1}{2}-\frac{1}{p}|$ (respectively, $\beta > \frac{n}{2}$).
\end{lemma}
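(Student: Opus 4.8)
The plan is to realize the (would-be) semigroup as a Fourier multiplier and reduce the whole statement to a single sharp multiplier estimate for an oscillatory symbol, the $t$-dependence being removed by scaling. Since $A$ acts as the Fourier multiplier with symbol $ia(\xi)$ and $a$ is real-valued, the family $\xi\mapsto e^{isa(\xi)}$ is unimodular, hence $L^2$-bounded, but fails to be $L^p$-bounded for $p\neq 2$; this is exactly what the $\beta$ integrations must repair. I would therefore define the candidate $\beta$-times integrated semigroup $S_\beta(t)=\mathcal F^{-1}m_t(\cdot)\mathcal F$ through the scalar symbol
\[ m_t(\xi)=\frac1{\Gamma(\beta)}\int_0^t(t-s)^{\beta-1}e^{isa(\xi)}\,ds \qquad(t\ge0,\ \xi\in\mathbb R^n), \]
which makes sense pointwise in $\xi$. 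Taking the Laplace transform in $t$ and using $\mathcal L[g_\beta](\lambda)=\lambda^{-\beta}$ together with $\int_0^\infty e^{-\lambda s}e^{isa(\xi)}\,ds=(\lambda-ia(\xi))^{-1}$ for $\mathrm{Re}\,\lambda>0$ gives $\int_0^\infty e^{-\lambda t}m_t(\xi)\,dt=\lambda^{-\beta}(\lambda-ia(\xi))^{-1}$, i.e.\ exactly the symbol of $\lambda^{-\beta}(\lambda-A)^{-1}$. As $(\lambda-ia(\xi))^{-1}$ is a smooth elliptic symbol of order $-m<0$, its inverse Fourier transform is an $L^1$-kernel, so $(0,\infty)\subset\rho(A)$ on all the spaces considered. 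Thus, once $m_t(D)\in B(X)$ with $\|m_t(D)\|\le Mt^\beta$, the definition of a tempered $(1,\beta)$-ROF is met and $A$ is its generator.

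The next step kills the $t$-dependence. Substituting $s=t\sigma$ gives $m_t(\xi)=t^\beta\,\widetilde m(ta(\xi))$ with $\widetilde m(z)=\frac1{\Gamma(\beta)}\int_0^1(1-\sigma)^{\beta-1}e^{i\sigma z}\,d\sigma$, so in particular $\|\widetilde m\|_\infty\le 1/\Gamma(\beta+1)$. Because $L^p$-multiplier norms are invariant under the dilations $\xi\mapsto r\xi$ and $a$ is homogeneous of degree $m$, choosing $r=t^{-1/m}$ yields
\[ \|m_t(D)\|_{B(L^p)}=t^\beta\,\|\widetilde m(a(\cdot))(D)\|_{B(L^p)}, \]
so the tempered bound $\|S_\beta(t)\|\le Mt^\beta$ follows at once as soon as the fixed symbol $\Phi:=\widetilde m\circ a$ is shown to be an $L^p$-multiplier (and, analogously, a multiplier of $L^1$ and of $C_0$). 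The case $p=2$, $\beta=0$, is already covered by $\|\widetilde m\|_\infty<\infty$ and Plancherel.

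The core is to control $\Phi=\widetilde m\circ a$. A Watson-lemma analysis of $\widetilde m$ gives $\widetilde m(z)\sim c\,e^{iz}z^{-\beta}$ as $|z|\to\infty$, so by ellipticity $|a(\xi)|\ge C|\xi|^m$ the high-frequency part of $\Phi$ behaves like $e^{ia(\xi)}|\xi|^{-m\beta}$, while the low-frequency part is smooth with compact support and hence harmless. For the $L^1$ endpoint (and $C_0$ by transposition) I would estimate the kernel directly: Cauchy--Schwarz with the weight $(1+|x|^2)^{\pm s/2}$ gives $\|\mathcal F^{-1}\Phi\|_{L^1}\lesssim\|\Phi\|_{H^s}$ for any $s>n/2$, and a derivative count using $|\partial^\gamma e^{ia(\xi)}|\lesssim|\xi|^{(m-1)|\gamma|}$ against the decay $|\xi|^{-m\beta}$ shows $\Phi\in H^s$ precisely when $m\beta>(m-1)s+n/2$; letting $s\downarrow n/2$ forces $\beta>n/2$, which is exactly the strict endpoint condition claimed.

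Finally, to reach $L^p$ with the \emph{closed} exponent $n|\tfrac12-\tfrac1p|$, I would embed $\Phi$ into an analytic family $\Phi_z=\widetilde m_z\circ a$ (continuing $\beta$ to a complex parameter $z$) and interpolate between the $L^2$-line $\mathrm{Re}\,z=0$, where the symbol is unimodular, and a line carrying the kernel estimate above. Plain Stein interpolation yields only the open condition $\beta>n|\tfrac12-\tfrac1p|$; to capture the endpoint value for $1<p<\infty$ one must replace the $L^1$ endpoint by the corresponding Hardy-space ($H^1$) estimate on the critical line $\mathrm{Re}(mz)=mn/2$, in the Fefferman--Stein/Miyachi manner, and interpolate $H^1$ with $L^2$, duality then covering $2<p<\infty$ and $C_0$. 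This sharp oscillatory-multiplier analysis is the main obstacle: one must control the analytic family with admissible growth in $\mathrm{Im}\,z$ and secure the $H^1$ bound on the critical line, which is precisely what pins down the exact exponent and forces the strict inequality $\beta>n/2$ at the genuine $L^1$/$C_0$ endpoints. Ellipticity and exact homogeneity of $a$ are what feed both the derivative bounds and the dilation reduction; because the argument counts derivatives rather than invoking stationary phase on the kernel, no curvature hypothesis on the level sets of $a$ is needed.
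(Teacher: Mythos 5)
The paper does not prove this lemma at all: it is quoted verbatim from Hieber \cite[Theorem 4.2]{H}, so there is no internal proof to compare against. Measured against that source, your reduction is exactly the right one and coincides with Hieber's: realize the candidate family as the Fourier multiplier $m_t(\xi)=(g_\beta\ast e^{i\cdot a(\xi)})(t)$, verify via the Laplace transform that its symbol is $\lambda^{-\beta}(\lambda-ia(\xi))^{-1}$, and use homogeneity of $a$ together with dilation invariance of multiplier norms to strip out the factor $t^\beta$ and reduce everything to the single fixed symbol $\widetilde m\circ a$, whose high-frequency part is $e^{ia(\xi)}|a(\xi)|^{-\beta}$ up to harmless smooth factors. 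Your weighted Cauchy--Schwarz/$H^s$ derivative count correctly delivers the $L^1$ and $C_0$ cases for $\beta>n/2$, and Stein interpolation of that kernel bound against the trivial $L^2$ line gives the open range $\beta>n|\tfrac12-\tfrac1p|$; these parts are essentially complete modulo routine bookkeeping (non-integer $s$, the non-oscillating $O(z^{-1})$ remainder in the asymptotics of $\widetilde m$, the branch of $(ia(\xi))^{-\beta}$).

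The one substantive gap is the closed endpoint $\beta=n|\tfrac12-\tfrac1p|$ for $1<p<\infty$, which is the actual content of the lemma as stated. You correctly identify that it requires an $H^1\to L^1$ bound for $e^{ia(\xi)}|\xi|^{-mn/2}$ on the critical line of the analytic family, but you do not prove it, and it is not a routine estimate: it is a Miyachi/Peral-type theorem whose proof does require a stationary-phase analysis of the kernel (the needed non-degeneracy being supplied automatically by ellipticity and $m\ge 2$ through the radial second derivative $m(m-1)a(\omega)r^{m-2}\neq 0$, so your remark that no \emph{additional} curvature hypothesis is needed is right, though the claim that one avoids stationary phase altogether is not). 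If you import that multiplier theorem as a citation, your argument closes and is in substance the proof Hieber gives; without it, the endpoint assertion remains unproved.
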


In \cite{H3} we can also find examples of differential operators on $L^p(\mathbb{R}^n)^N,$ $1< p<\infty,$ generating tempered integrated semigroups.

Examples of differential operators generating tempered integrated cosine operator functions can be found in \cite{AK,H,MK}. We recall that the Laplacian operator $\Delta$ on $L^p(\mathbb{R}^n),$ $n\geq 2,$ $1\leq p<\infty,$ $p\neq2$, with maximal distributional domain, is the generator of a $\beta$-times integrated cosine operator function for any $\beta > (n-1)|\frac{1}{2}-\frac{1}{p}|$ (\cite[Proposition 3.2]{MK}). Notice that the  Laplacian operator is injective densely defined. Moreover it has dense range but for $p=1.$


Now we study the behaviour of tempered $(\alpha,\beta)$-ROFs as $t\rightarrow0$ and as $t\rightarrow \infty.$

\begin{proposition} \label{batoatinf} Let $0<\alpha \leq 2$ and $\beta\geq 0.$ Assume that $A$ is the generator of a tempered $(\alpha,\beta)$-ROF $R_{\alpha,\beta}.$ The following assertions hold.

\begin{enumerate}[(i)]
\item $x\in \overline{D(A)}$ if and only if
\begin{equation} \label{convat0}
  \lim_{t\rightarrow 0^+} \frac{R_{\alpha,\beta}(t)x}{t^\beta}=\frac{x}{\Gamma(\beta +1)}.
\end{equation}

 \item $x\in \overline{R(A)}$ if and only if
 \begin{equation} \label{convatinf}
 \lim_{t\rightarrow \infty} \frac{(I_{0+}^{\alpha}R_{\alpha,\beta}(.)x)(t)}{t^{\alpha+\beta}}=0.
 \end{equation}
 \end{enumerate}
\end{proposition}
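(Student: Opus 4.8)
The plan is to handle each forward (``only if'') implication directly from the structural identities of Lemma~\ref{relation_g_family}, and each converse by an Abelian-type passage to the resolvent followed by the classical resolvent characterizations of $\overline{D(A)}$ and $\overline{R(A)}$. Throughout I will use the tempered bound $\|R_{\alpha,\beta}(t)\|\le M t^\beta$ together with the elementary convolution identity $(g_\alpha * s^\beta)(t)=\frac{\Gamma(\beta+1)}{\Gamma(\alpha+\beta+1)}t^{\alpha+\beta}$, which yields the uniform estimate $\|(I_{0+}^{\alpha}R_{\alpha,\beta}(\cdot)x)(t)\|\le \frac{M\,\Gamma(\beta+1)}{\Gamma(\alpha+\beta+1)}t^{\alpha+\beta}\|x\|$.

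For the forward direction of (i), I would start from $x\in D(A)$, where Lemma~\ref{relation_g_family}(i) gives
$$\frac{R_{\alpha,\beta}(t)x}{t^\beta}-\frac{x}{\Gamma(\beta+1)}=\frac{(I_{0+}^{\alpha}R_{\alpha,\beta}(\cdot)Ax)(t)}{t^\beta},$$
whose right-hand side is $O(t^\alpha)$ by the estimate above, hence vanishes as $t\to 0^+$. Because $\|R_{\alpha,\beta}(t)x/t^\beta\|\le M\|x\|$ uniformly in $t$, an $\varepsilon/3$ approximation extends (\ref{convat0}) from $D(A)$ to all of $\overline{D(A)}$. The forward direction of (ii) is entirely parallel: for $x=Ay\in R(A)$, Lemma~\ref{relation_g_family}(i) rewrites $(I_{0+}^{\alpha}R_{\alpha,\beta}(\cdot)x)(t)=R_{\alpha,\beta}(t)y-\frac{t^\beta}{\Gamma(\beta+1)}y$, so dividing by $t^{\alpha+\beta}$ gives two terms of order $t^{-\alpha}$ which tend to $0$ as $t\to\infty$; the uniform bound $\|(I_{0+}^{\alpha}R_{\alpha,\beta}(\cdot)x)(t)\|/t^{\alpha+\beta}\le M''\|x\|$ then upgrades this to $\overline{R(A)}$ by density.

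For the converse directions I would route everything through the resolvent. Laplace-transforming (\ref{ris}) produces the two master formulas
$$\lambda^\alpha(\lambda^\alpha-A)^{-1}x=\lambda^{\beta+1}\int_0^\infty e^{-\lambda t}R_{\alpha,\beta}(t)x\,dt,\qquad \lambda^\alpha(\lambda^\alpha-A)^{-1}x=\lambda^{\alpha+\beta+1}\int_0^\infty e^{-\lambda t}(I_{0+}^{\alpha}R_{\alpha,\beta}(\cdot)x)(t)\,dt.$$
For (i), subtracting $x=\lambda^{\beta+1}\int_0^\infty e^{-\lambda t}\frac{t^\beta}{\Gamma(\beta+1)}x\,dt$ and rescaling $r=\lambda t$ turns the first formula into $\int_0^\infty e^{-r}r^\beta\frac{\psi(r/\lambda)}{(r/\lambda)^\beta}\,dr$, where $\psi(t)=R_{\alpha,\beta}(t)x-\frac{t^\beta}{\Gamma(\beta+1)}x$; hypothesis (\ref{convat0}) makes the integrand tend to $0$ pointwise, while the tempered bound dominates it by $e^{-r}r^\beta M'\|x\|$, so dominated convergence gives $\lambda^\alpha(\lambda^\alpha-A)^{-1}x\to x$ as $\lambda\to\infty$, and since $\lambda^\alpha(\lambda^\alpha-A)^{-1}x\in D(A)$ this forces $x\in\overline{D(A)}$. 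For (ii), the same rescaling applied to the second formula gives $\int_0^\infty e^{-r}r^{\alpha+\beta}\frac{w(r/\lambda)}{(r/\lambda)^{\alpha+\beta}}\,dr$ with $w=I_{0+}^{\alpha}R_{\alpha,\beta}(\cdot)x$; letting $\lambda\to 0^+$ so that $r/\lambda\to\infty$, hypothesis (\ref{convatinf}) and dominated convergence yield $\lambda^\alpha(\lambda^\alpha-A)^{-1}x\to 0$. Writing $\mu=\lambda^\alpha\to 0^+$ and using $-A(\mu-A)^{-1}x=x-\mu(\mu-A)^{-1}x\in R(A)$, the limit forces $x\in\overline{R(A)}$.

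The forward implications are routine, since Lemma~\ref{relation_g_family} does the work. The main obstacle is the Abelian passage in the two converses: one must set up the rescaling $r=\lambda t$ so that the pointwise limit of the integrand is exactly governed by the hypothesis (at $t\to 0^+$ for (i), at $t\to\infty$ for (ii)) and then verify an integrable domination uniformly in $\lambda$. This is precisely where the tempered growth bound $\|R_{\alpha,\beta}(t)\|\le M t^\beta$ is indispensable, as it keeps $\psi(t)/t^\beta$ and $w(t)/t^{\alpha+\beta}$ bounded on all of $[0,\infty)$. Once the two resolvent limits are in hand, the identifications with $\overline{D(A)}$ and $\overline{R(A)}$ follow immediately from $(\mu-A)^{-1}X\subseteq D(A)$ and the identity $\mu(\mu-A)^{-1}=I+A(\mu-A)^{-1}$, respectively.
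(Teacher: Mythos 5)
Your proof is correct, and the forward (``only if'') halves coincide with the paper's argument: Lemma~\ref{relation_g_family}(i) plus the uniform bound $\|(I_{0+}^{\alpha}R_{\alpha,\beta}(\cdot)x)(t)\|\le M t^{\alpha+\beta}\|x\|$ and a density argument. Where you genuinely diverge is in the two converses. The paper never leaves the time domain: for (i) it observes that $R_{\alpha,\beta}(t)x\in\overline{D(A)}$ for \emph{every} $x$ and $t$ (because $(I_{0+}^{\lceil\alpha\rceil}R_{\alpha,\beta}(\cdot)x)(t)\in D(A)$ by Lemma~\ref{relation_g_family}(ii), and integer-order derivatives are limits of difference quotients of $D(A)$-elements), so the limit in (\ref{convat0}) automatically lands in the closed set $\overline{D(A)}$; for (ii) it applies Lemma~\ref{relation_g_family}(ii) to the $(\alpha,\alpha+\beta)$-ROF $I_{0+}^{\alpha}R_{\alpha,\beta}$ to write $\frac{t^{\alpha+\beta}}{\Gamma(\alpha+\beta+1)}x=(I_{0+}^{\alpha}R_{\alpha,\beta}(\cdot)x)(t)-A(I_{0+}^{2\alpha}R_{\alpha,\beta}(\cdot)x)(t)$, divides by $t^{\alpha+\beta}$ and lets $t\to\infty$, exhibiting $x$ as a limit of elements of $R(A)$ plus a term that vanishes by hypothesis. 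Your route instead passes through the resolvent via the Abelian rescaling $r=\lambda t$ in (\ref{ris}), obtaining $\lambda^{\alpha}(\lambda^{\alpha}-A)^{-1}x\to x$ as $\lambda\to\infty$ and $\mu(\mu-A)^{-1}x\to 0$ as $\mu\to 0^{+}$, and then invokes the classical resolvent characterizations of $\overline{D(A)}$ and $\overline{R(A)}$; this is legitimate here precisely because the tempered hypothesis gives $(0,\infty)\subset\rho(A)$ and the Laplace representation for all $\lambda>0$, and the tempered bound supplies the integrable dominating function. The paper's version is shorter and purely algebraic; yours is the more classical Tauberian/Abelian pattern and makes the resolvent behaviour at $0$ and $\infty$ explicit, which is information the paper's proof does not record.
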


\begin{proof}
(i) Let $x\in X.$ By (ii) of Lemma \ref{relation_g_family} we have $(I_{0+}^{\lceil \alpha \rceil}R_{\alpha,\beta}(.)x)(t)\in D(A)$ and hence taking derivatives of integer order it follows that $R_{\alpha,\beta}(t)x\in \overline{D(A)}.$ Now it is clear that (\ref{convat0}) implies that $x\in \overline{D(A)}.$

It is not hard to show that there exists a constant $M>0$ such that
\begin{equation}\label{ubound}
\|(I_{0+}^{\alpha}R_{\alpha,\beta})(t)\|\leq M t^{\alpha+\beta}\quad (t>0).
\end{equation}
From (i) of Lemma \ref{relation_g_family} we obtain that (\ref{convat0}) holds on $D(A),$ and hence it also holds for   $x\in \overline{D(A)}.$

(ii) By (i) of Lemma \ref{relation_g_family} it follows that (\ref{convatinf}) holds on $R(A),$ and thanks to (\ref{ubound}) it also holds for   $x\in \overline{R(A)}.$ Conversely, suppose (\ref{convatinf}) holds.  From the fact that $I_{0+}^{\alpha}R_{\alpha,\beta}$ is the $(\alpha,\alpha+\beta)$-ROF generated by $A$ and (ii) of Lemma \ref{relation_g_family},  we have
\begin{equation*}
A(I_{0+}^{2\alpha}R_{\alpha,\beta}(.)x)(t)=(I_{0+}^{\alpha}R_{\alpha,\beta}(.)x)(t)-\frac{t^{\alpha+\beta}}{\Gamma(\alpha+\beta+1)}x,
\end{equation*}
and accordingly $x\in \overline{R(A)}.$
\end{proof}

From Lemma \ref{Resolvent_Laplace} it is easy to obtain the following generalization of \cite[Lemma 2.6]{LCL}  to the case of tempered integrated resolvent operator functions.

\begin{proposition} \label{Generation_bounded}
Let $0<\alpha \leq 2$ and $\beta\geq 0.$ The operator $A$ is the generator of a tempered $(\alpha,\beta)$-ROF if and only if $\Sigma_{\alpha \pi/2}\subset \rho(A)$ and there exists a strongly continuous function  $R_{\alpha,\beta}:[0,+\infty)\rightarrow B(X)$ satisfying  (\ref{tempered}) such that (\ref{ris}) holds for all $\text{Re}\lambda >0.$ In this case, $R_{\alpha,\beta }$ is the $(\alpha,\beta)$-ROF generated by $A$.
\end{proposition}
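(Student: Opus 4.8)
The statement is an equivalence, so the plan is to prove the two implications separately, with essentially all the content sitting in the ``only if'' direction via Lemma~\ref{Resolvent_Laplace}; the reverse direction is a pure restriction of hypotheses.

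For the ``if'' direction I would simply restrict. If $\Sigma_{\alpha\pi/2}\subset\rho(A)$, then in particular $(0,\infty)\subset\Sigma_{\alpha\pi/2}\subset\rho(A)$, since every positive real number has argument $0<\alpha\pi/2$; and if (\ref{ris}) holds for all $\operatorname{Re}\lambda>0$, it holds a fortiori for all $\lambda>0$. Together with the assumed bound (\ref{tempered}), this is exactly the definition of $A$ generating a tempered $(\alpha,\beta)$-ROF, realized by the same family $R_{\alpha,\beta}$.

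For the ``only if'' direction the key observation is that temperedness is stronger than exponential boundedness with any prescribed positive growth bound: from $\|R_{\alpha,\beta}(t)\|\le M t^{\beta}$ one gets, for every $w>0$, a constant $M_w$ with $\|R_{\alpha,\beta}(t)\|\le M_w e^{wt}$, so that (\ref{eb}) holds with growth bound $w$. Hence, for each fixed $w>0$, the operator $A$ is the generator of an exponentially bounded $(\alpha,\beta)$-ROF, and Lemma~\ref{Resolvent_Laplace} applies: for every $\lambda$ with $\operatorname{Re}\lambda>w$ we have $\lambda^\alpha\in\rho(A)$ and (\ref{ris}) holds. Letting $w$ decrease to $0$, i.e.\ taking the union over all $w>0$, yields $\lambda^\alpha\in\rho(A)$ together with (\ref{ris}) for every $\lambda$ with $\operatorname{Re}\lambda>0$.

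It then remains to turn the family $\{\lambda^\alpha:\operatorname{Re}\lambda>0\}$ of resolvent points into the sector condition. Since $\{\lambda:\operatorname{Re}\lambda>0\}=\Sigma_{\pi/2}$ and, for $0<\alpha\le 2$, the principal branch $z\mapsto z^{\alpha}$ maps $\Sigma_{\pi/2}$ onto $\Sigma_{\alpha\pi/2}$ (given $\mu=\rho e^{i\psi}$ with $|\psi|<\alpha\pi/2$, the point $\lambda=\rho^{1/\alpha}e^{i\psi/\alpha}$ lies in $\Sigma_{\pi/2}$ and satisfies $\lambda^{\alpha}=\mu$), we obtain $\Sigma_{\alpha\pi/2}=\{\lambda^\alpha:\operatorname{Re}\lambda>0\}\subset\rho(A)$. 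Combined with the previous step this delivers the full right-hand side, with the same $R_{\alpha,\beta}$ already satisfying (\ref{tempered}); that this family is the $(\alpha,\beta)$-ROF generated by $A$ is the uniqueness theorem for the Laplace transform, exactly as recorded after the definition of $(\alpha,\beta)$-ROF. I do not expect a genuine obstacle here: the only points requiring care are the elementary majorization $t^{\beta}\le M_w e^{wt}$ and the surjectivity of $z\mapsto z^{\alpha}$ onto the sector, while the substantive work is already packaged in Lemma~\ref{Resolvent_Laplace}.
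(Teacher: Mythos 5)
Your proof is correct and follows exactly the route the paper intends: the paper gives no written proof, stating only that the proposition "is easy to obtain from Lemma \ref{Resolvent_Laplace}," and your argument (temperedness implies (\ref{eb}) for every $w>0$, apply Lemma \ref{Resolvent_Laplace}, let $w\downarrow 0$, and identify $\{\lambda^{\alpha}:\operatorname{Re}\lambda>0\}$ with $\Sigma_{\alpha\pi/2}$) is precisely the intended deduction. The converse-by-restriction and the appeal to uniqueness of the Laplace transform are also exactly what the paper's definitions require.
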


\begin{theorem} \label{subordination_bounded}
Let $0<\alpha \leq 2$ and $\beta\geq 0.$  Assume that $A$ is the generator of a tempered $(\alpha,\beta)$-ROF. The following assertions hold.
\begin{enumerate}[(i)]
  \item $-A\in \text{Sect}(\pi-\alpha \pi/2).$
  \item Suppose $A$ is densely defined. Then $A\in \mathcal{A}_{\gamma }(\theta _{\gamma})$ for all $0<\gamma<\alpha,$ where $\theta _{\gamma}=\min \{1,\frac{\alpha-\gamma}{\gamma }\}\frac{\pi }{2}$.
\end{enumerate}

\end{theorem}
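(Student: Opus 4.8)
The plan is to reduce both assertions to the characterization of tempered $(\alpha,\beta)$-ROFs in terms of the resolvent (Proposition~\ref{Generation_bounded}) together with the spectral mapping behaviour of the resolvent under the Mittag-Leffler/power substitutions that underlie Lemma~\ref{Generation_ba}. The hypothesis gives us, via Proposition~\ref{Generation_bounded}, that $\Sigma_{\alpha\pi/2}\subset\rho(A)$ together with the resolvent bound coming from (\ref{ris}) and the temperedness estimate (\ref{tempered}); the whole proof is then an exercise in converting these into the sectoriality statements.

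For part (i), I would first extract from (\ref{ris}) and (\ref{tempered}) a resolvent estimate on the sector $\Sigma_{\alpha\pi/2}$. Writing $z=\lambda^\alpha$ for $\lambda\in\Sigma_{\pi/2}$ (so that $z$ ranges over $\Sigma_{\alpha\pi/2}$), the defining identity reads $(\lambda^\alpha-A)^{-1}=\lambda^{1+\beta-\alpha}\int_0^\infty e^{-\lambda t}R_{\alpha,\beta}(t)\,dt$, and the bound $\|R_{\alpha,\beta}(t)\|\le Mt^\beta$ yields $\|(\lambda^\alpha-A)^{-1}\|\le C|\lambda|^{1+\beta-\alpha}\cdot|\lambda|^{-1-\beta}\,(\operatorname{Re}\lambda)^{-1}\cdot\Gamma(\beta+1)$ after estimating the Laplace integral, which collapses to $\|z(z-A)^{-1}\|\le C'$ uniformly on $\Sigma_{\alpha\pi/2}$ once one controls $\operatorname{Re}\lambda$ in terms of $|\lambda|$ on proper subsectors. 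By definition this is precisely the statement that $-A\in\text{Sect}(\pi-\alpha\pi/2)$: the resolvent set contains $\mathbb{C}\setminus\overline{\Sigma_{\pi-\alpha\pi/2}}=\Sigma_{\alpha\pi/2}$ and $\|z(z-A)^{-1}\|$ is bounded there (and, by a standard subsector argument, on every slightly larger sector with angle $<\alpha\pi/2$, giving the uniform bound required on $\mathbb{C}\setminus\overline{\Sigma_{\omega'}}$ for $\omega'>\pi-\alpha\pi/2$).

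For part (ii), assuming $A$ densely defined, I would invoke the equivalence (i)$\Leftrightarrow$(iii) of Lemma~\ref{Generation_ba} applied with exponent $\gamma$ in place of $\alpha$: to conclude $A\in\mathcal{A}_\gamma(\theta_\gamma)$ it suffices to show $-A\in\text{Sect}\bigl(\pi-(\frac{\pi}{2}+\theta_\gamma)\gamma\bigr)$. From part (i) we already have $-A\in\text{Sect}(\pi-\alpha\pi/2)$, i.e.\ $-A$ is sectorial of angle $\pi-\alpha\pi/2$. The candidate angle $\theta_\gamma=\min\{1,\frac{\alpha-\gamma}{\gamma}\}\frac{\pi}{2}$ is engineered exactly so that $(\frac{\pi}{2}+\theta_\gamma)\gamma\le\alpha\pi/2$, hence $\pi-(\frac{\pi}{2}+\theta_\gamma)\gamma\ge\pi-\alpha\pi/2$, so the sectoriality angle we have in hand is no larger than the one required. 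I would verify the two branches of the minimum separately: when $\frac{\alpha-\gamma}{\gamma}\ge1$ (i.e.\ $\gamma\le\alpha/2$) one takes $\theta_\gamma=\pi/2$ and checks $(\frac{\pi}{2}+\frac{\pi}{2})\gamma=\pi\gamma\le\alpha\pi/2$; when $\frac{\alpha-\gamma}{\gamma}<1$ one takes $\theta_\gamma=\frac{\alpha-\gamma}{\gamma}\cdot\frac{\pi}{2}$ and computes $(\frac{\pi}{2}+\theta_\gamma)\gamma=\frac{\pi}{2}\gamma+\frac{\alpha-\gamma}{2}\pi\cdot\frac{1}{1}=\frac{\alpha\pi}{2}$ exactly, so the constraint is met with equality. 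One should also confirm the side condition $\theta_\gamma\in(0,\min\{\frac{\pi}{2},\frac{\pi}{\gamma}-\frac{\pi}{2}\}]$ demanded by Lemma~\ref{Generation_ba}, which again follows from the same arithmetic.

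The genuinely delicate point I expect is the resolvent estimation in part (i): carefully bounding $\bigl|\int_0^\infty e^{-\lambda t}R_{\alpha,\beta}(t)\,dt\bigr|$ for complex $\lambda$ off the positive axis so that the powers of $|\lambda|$ and the factor $(\operatorname{Re}\lambda)^{-(\beta+1)}$ recombine into a uniform bound on proper subsectors of $\Sigma_{\alpha\pi/2}$. The temperedness (\ref{tempered}) makes the integral converge and gives $\int_0^\infty e^{-\operatorname{Re}\lambda\, t}t^\beta\,dt=\Gamma(\beta+1)(\operatorname{Re}\lambda)^{-(\beta+1)}$, and on a subsector $\operatorname{Re}\lambda\ge c_{\theta}|\lambda|$, so the bound is indeed uniform there; the mild obstacle is organizing this so that the sector angle for $z=\lambda^\alpha$ comes out to be exactly $\alpha\pi/2$ and the constant depends only on the chosen subsector. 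Everything in part (ii) is then purely formal, being the trigonometric bookkeeping that matches $\theta_\gamma$ to the sectoriality angle supplied by part (i).
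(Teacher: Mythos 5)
Your proposal is correct and follows essentially the same route as the paper: Proposition \ref{Generation_bounded} gives the Laplace representation of $(\lambda^\alpha-A)^{-1}$ for $\operatorname{Re}\lambda>0$, the temperedness bound yields $\|(re^{i\varphi}-A)^{-1}\|\leq \Gamma(\beta+1)M\cos(\varphi/\alpha)^{-\beta-1}r^{-1}$ on $\Sigma_{\alpha\pi/2}$ (uniform on proper subsectors, which is exactly sectoriality of $-A$ of angle $\pi-\alpha\pi/2$), and (ii) is then the trigonometric matching of $\theta_\gamma$ against Lemma \ref{Generation_ba}(iii), just as you carried out. Your explicit verification of the two branches of the minimum and of the side condition on $\theta_\gamma$ is more detailed than the paper's one-line appeal to the lemma, but the argument is the same.
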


\begin{proof}(i) Thanks to Proposition \ref{Generation_bounded}, for $r>0$ and $0\leq \varphi <\alpha \pi/2$ we obtain that
$$\|(r\text{e}^{i\varphi}-A)^{-1} \| \leq \Gamma(\beta+1)M \cos(\varphi/\alpha)^{-\beta-1} r^{-1},$$
and from it the result follows easily.

(ii) It is a straightforward consequence of (i) and Lemma \ref{Generation_ba}.
\end{proof}

\begin{remark}
Recently a subordination principle for regularized resolvent families has been obtained in \cite[Theorem 4.5]{AM} (see also \cite[Theorem 4.13]{KLW}). However, the subordination principle (ii) of Theorem \ref{subordination_bounded} does not follow from it, although in the case $\beta=0$ it can be obtained from Bajlekova's subordination principle (\cite[Theorems 3.1 and 3.3]{B}).
\end{remark}

\setcounter{equation}{0}

\section{The inverse generator problem}\label{MS}
We start this section by analysing the inverse generator problem for generators of analytic resolvent operator functions. From \cite[Theorem 4.9]{CL} we know that an operator is the generator of a tempered analytic $(\alpha,\beta)$-ROF if and only if it is the generator of a bounded analytic $\alpha$-ROF. Therefore we shall only consider the later in our study.
\begin{theorem}\label{inv_generator}
Assume that $A$ is a linear operator with dense range. The following assertions hold.
\begin{enumerate}[(i)]
\item  Let $\alpha \in (0,2)$ and $\theta _{0}\in (0,\min \{\frac{\pi }{2%
},\frac{\pi }{\alpha }-\frac{\pi }{2}\}]$. If $A\in \mathcal{A}_{\alpha
}(\theta _{0})$, then $A^{-1}\in \mathcal{A}_{\alpha }(\theta _{0}).$
\item  Let $0<\gamma <\alpha \leq 2$ and $\beta\geq 0$. If $A$ is the generator of a tempered $(\alpha,\beta)$-ROF $R_{\alpha,\beta;A}$ and it is densely defined, then
$A^{-1}\in \mathcal{A}_{\gamma }(\phi_\gamma )$, where $\theta _{\gamma}=\min \{1,\frac{\alpha-\gamma}{\gamma }\}\frac{\pi }{2}$. Moreover, the $\gamma$-ROF generated by $A^{-1}$ is given by
\begin{equation}\label{sub_irof}
  \begin{aligned}
    &R_{\gamma;A^{-1}}(t)x \\
    &= x-t^{\gamma(1+\beta)/\alpha} \int_0^\infty   \phi(\gamma/\alpha,1+\gamma(1+\beta)/\alpha;-st^{\gamma/\alpha})  R_{\alpha,\beta;A}(s)x \,ds
  \end{aligned}
\end{equation}
for all $x\in X,\,t>0.$
\end{enumerate}
\end{theorem}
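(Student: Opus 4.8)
The plan is to handle the two parts separately, deriving the existence statement in (ii) by reducing it to (i) via the subordination principle of Theorem \ref{subordination_bounded}, and then establishing the explicit formula (\ref{sub_irof}) by a Laplace transform computation together with the uniqueness theorem.

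For part (i), I would first observe that $A$, being the generator of a bounded analytic (hence exponentially bounded) $\alpha$-ROF, is densely defined, and that by Lemma \ref{Generation_ba} the hypothesis $A\in\mathcal{A}_\alpha(\theta_0)$ is equivalent to $-A\in\text{Sect}(\omega)$ with $\omega=\pi-(\frac{\pi}{2}+\theta_0)\alpha$. In particular $A$ is sectorial, so $\ker A\cap\overline{R(A)}=\{0\}$; combined with $\overline{R(A)}=X$ this forces $A$ to be injective, and $D(A^{-1})=R(A)$ is dense so that $A^{-1}$ is densely defined. The crux is then the stability of the sectorial class under inversion recalled in Section 2: from $-A\in\text{Sect}(\omega)$ and injectivity one gets $-A^{-1}=(-A)^{-1}\in\text{Sect}(\omega)$. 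Applying the equivalence of items (i) and (iii) of Lemma \ref{Generation_ba} to the densely defined operator $A^{-1}$ then yields $A^{-1}\in\mathcal{A}_\alpha(\theta_0)$.

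For the first assertion of part (ii), I would invoke Theorem \ref{subordination_bounded}(ii): since $A$ is densely defined and generates a tempered $(\alpha,\beta)$-ROF, we have $A\in\mathcal{A}_\gamma(\theta_\gamma)$ with $\theta_\gamma=\min\{1,\frac{\alpha-\gamma}{\gamma}\}\frac{\pi}{2}$ for every $0<\gamma<\alpha$. It then remains to check that $\theta_\gamma\leq\min\{\frac{\pi}{2},\frac{\pi}{\gamma}-\frac{\pi}{2}\}$, so that part (i) applies with $\alpha$ replaced by $\gamma$; distinguishing the cases $\alpha\geq 2\gamma$ and $\alpha<2\gamma$, the bound $\theta_\gamma\leq\frac{\pi}{2}$ is immediate and the bound $\theta_\gamma\leq\frac{\pi}{\gamma}-\frac{\pi}{2}$ reduces precisely to the hypothesis $\alpha\leq 2$. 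Since $A$ has dense range, part (i) (with $\alpha\to\gamma$, $\theta_0\to\theta_\gamma$) gives $A^{-1}\in\mathcal{A}_\gamma(\theta_\gamma)$, so $A^{-1}$ generates a bounded analytic $\gamma$-ROF $R_{\gamma;A^{-1}}$.

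To identify $R_{\gamma;A^{-1}}$ with the right-hand side of (\ref{sub_irof}), I would compute its Laplace transform. By Lemma \ref{Resolvent_Laplace} applied to $A^{-1}$ it equals $\lambda^{\gamma-1}(\lambda^\gamma-A^{-1})^{-1}x$; using the inversion formula for sectorial operators from Section 2 with spectral parameter $\lambda^{-\gamma}\in\rho(A)$ (legitimate since $\Sigma_{\alpha\pi/2}\subset\rho(A)$ by Theorem \ref{subordination_bounded}(i)), this becomes $\lambda^{-1}x-\lambda^{-\gamma-1}(\lambda^{-\gamma}-A)^{-1}x$. The first summand is the transform of the constant function $x$, matching the leading term of (\ref{sub_irof}). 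For the second I would subordinate: inserting $\mu=\lambda^{-\gamma/\alpha}$ into the resolvent representation (\ref{ris}) of $R_{\alpha,\beta;A}$ gives $(\lambda^{-\gamma}-A)^{-1}x=\lambda^{\frac{\gamma}{\alpha}(\alpha-\beta-1)}\int_0^\infty e^{-\lambda^{-\gamma/\alpha}s}R_{\alpha,\beta;A}(s)x\,ds$, so that $\lambda^{-\gamma-1}(\lambda^{-\gamma}-A)^{-1}x=\lambda^{-1-\gamma(1+\beta)/\alpha}\int_0^\infty e^{-\lambda^{-\gamma/\alpha}s}R_{\alpha,\beta;A}(s)x\,ds$. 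Recognizing the factor $\lambda^{-1-\gamma(1+\beta)/\alpha}e^{-\lambda^{-\gamma/\alpha}s}$ as the transform produced by (\ref{LTW}) with $\rho=\gamma/\alpha$ and $\nu=1+\beta$ (so $1+\nu\rho=1+\gamma(1+\beta)/\alpha>0$), and interchanging the order of integration, I would arrive exactly at the transform of $t\mapsto t^{\gamma(1+\beta)/\alpha}\int_0^\infty\phi(\gamma/\alpha,1+\gamma(1+\beta)/\alpha;-st^{\gamma/\alpha})R_{\alpha,\beta;A}(s)x\,ds$; uniqueness of the Laplace transform then yields (\ref{sub_irof}).

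The step I expect to be the main obstacle is the rigorous justification of this interchange of integrals and the convergence of the inner integral in (\ref{sub_irof}). Here I would exploit that $0<\gamma/\alpha<1$, so the Wright function satisfies the exponential decay estimate (\ref{b_W}), $|\phi(\gamma/\alpha,\mu;-r)|\leq L\,e^{-ar^q}$ with $q=\alpha/(\alpha+\gamma)>\tfrac12$; combined with the tempered bound $\|R_{\alpha,\beta;A}(s)\|\leq M s^\beta$ from (\ref{tempered}), this renders the relevant double integral absolutely convergent and legitimizes both the Fubini interchange and the well-definedness of the representation.
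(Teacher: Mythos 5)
Your proposal is correct and follows essentially the same route as the paper: part (i) via the stability of sectorial operators under inversion combined with Lemma \ref{Generation_ba}, and part (ii) by reducing the generation statement to Theorem \ref{subordination_bounded} plus part (i), then identifying the representation \eqref{sub_irof} through the Wright-function Laplace transform \eqref{LTW}, the resolvent inversion formula, Fubini (justified by the decay estimate \eqref{b_W} and the tempered bound \eqref{tempered}), and uniqueness of the Laplace transform. The only difference is cosmetic — you unfold $\lambda^{\gamma-1}(\lambda^\gamma-A^{-1})^{-1}x$ into the integral formula while the paper transforms the formula and recognizes the resolvent — and your version of the exponent $e^{-s\lambda^{-\gamma/\alpha}}$ is in fact the correct one (the paper's $\lambda^{-\alpha/\gamma}$ there is a typo).
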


\begin{proof} (i) It is a consequence of the nice fact that the inverse of a sectorial operator is sectorial of the same angle and Lemma \ref{Generation_ba}.

(ii) It follows from Theorem \ref{subordination_bounded} and (i). Let us show (\ref{sub_irof}). From (\ref{b_W}) it is easy to show that $R_{\gamma;A^{-1}}(t)x$  is continuous for $t>0,$ and it is bounded since
\begin{eqnarray*}
 \| R_{\gamma;A^{-1}}(t)x -x \|& \leq & \| x \|L \,  t^{\gamma(1+\beta)/\alpha} \int_0^\infty   \exp(-as^qt^{q\gamma/\alpha})\,  s^\beta ds \\
   & \leq & \| x \|\, L \,q^{-1}\, t^{\gamma(1+\beta)/\alpha} \int_0^\infty   \exp(-art^{q\gamma/\alpha})\,  r^{(\beta+1)/q-1} dr \\
   & = & \| x \| \,L\, q^{-1}\, \Gamma((\beta+1)/q) \, a^{-(\beta+1)/q}.
\end{eqnarray*}
Hence, $R_{\gamma;A^{-1}}(t)x$ is Laplace transformable for $\text{Re}\lambda>0.$ By applying Fubini's theorem and by (\ref{LTW}), we get that
 \begin{equation*}
  \begin{aligned}
    &\int_0^\infty  e^{-t\lambda } R_{\gamma;A^{-1}}(t)x \, dt=\lambda^{-1}x\\
 & \hspace{1mm} - \int_0^\infty R_{\alpha, \beta;A}(s)x  \int_0^\infty e^{-t \lambda } t^{\gamma(1+\beta)/\alpha}  \phi(\gamma/\alpha,1+\gamma(1+\beta)/\alpha,-st^{\gamma/\alpha})\, dt \, ds \\
   &=\lambda^{-1}x- \lambda^{-1-\gamma(1+\beta)/\alpha}\int_0^\infty \exp(-s\lambda^{-\alpha/\gamma}) R_{\alpha,\beta;A}(s)x  \, ds \\
   &= \lambda^{-1}x-\lambda^{-1-\gamma}(\lambda ^{-\gamma }-A)^{-1}x\\
   &= \lambda^{\gamma-1}(\lambda ^{\gamma }-A^{-1})^{-1}x.
   \end{aligned}
\end{equation*}
which shows our claim thanks to the uniqueness of the Laplace transform.
\end{proof}

\begin{remark}
 Notice that by (ii) of Lemma \ref{pa}  all the families generated by the inverse operator in Theorem \ref{inv_generator} are stable.
\end{remark}

The next theorem is a generalization of \cite[Theorem 3.3]{PZ},  which shows that the inverse of the generator of a bounded $C_0-$semigroups always generates a once integrated semigroup, to the more general setting of inverses of generators of tempered integrated fractional resolvent families.

\begin{theorem}\label{MT}
  Let $0<\alpha\leq 2,$  $\beta\geq 0$ and $\gamma>\beta+1/2.$ Let $A$ be the generator of a tempered $(\alpha,\beta)$-ROF $R_{\alpha,\beta;A}$. Assume that $A$ is injective. Then the following assertions hold:
   \begin{enumerate}[(i)]
     \item The inverse operator $A^{-1}$ is the generator of a tempered $(\alpha,\gamma)$-ROF $R_{\alpha,\gamma;A^{-1}}$ given by
     \begin{equation}\label{IRF}
     \begin{aligned}
     R&_{\alpha,\gamma;A^{-1}}(t)x \\
     &= \frac{t^\gamma}{\Gamma(\gamma+1)} x-t^{\frac{1+\beta+\gamma}{2}} \int_0^\infty  J_{1+\beta+\gamma}(2\sqrt{st}) s^{-\frac{1+\beta+\gamma}{2}}  R_{\alpha,\beta;A}(s)x \,ds
     \end{aligned}
     \end{equation}
     for all $x\in X,\,t>0.$ 
     \item The operator $A^{-1}$ is the generator of a tempered $(\alpha,\beta)$-ROF if and only if  for all $x\in X,$  the mapping $R_{\alpha,1+\beta;A^{-1}}(\cdot)x:[0, \infty)\rightarrow X$ is continuously differentiable and its derivative  $\frac{d}{dt}R_{\alpha,1+\beta;A^{-1}}:[0,\infty) \rightarrow B(X)$ satisfies condition (\ref{tempered}). In this case, the tempered $(\alpha,\beta)$-ROF generated by $A^{-1}$ is given by
         \begin{equation*}
         R_{\alpha,\beta;A^{-1}}(s)x=\frac{d}{dt}(R_{\alpha,1+\beta;A^{-1}}(t)x)(s) \quad (s \geq 0, \, x \in X).
         \end{equation*}
   \end{enumerate}
\end{theorem}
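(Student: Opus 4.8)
The plan is to verify directly the three defining requirements of a tempered $(\alpha,\gamma)$-ROF for $A^{-1}$ and the candidate family (\ref{IRF}): that $(0,\infty)\subset\rho(A^{-1})$, that the right-hand side of (\ref{IRF}) defines a strongly continuous $B(X)$-valued function satisfying (\ref{tempered}) with exponent $\gamma$, and that its Laplace transform equals $\lambda^{\alpha-\gamma-1}(\lambda^\alpha-A^{-1})^{-1}x$ for $\lambda>0$. The spectral point is immediate: by Theorem \ref{subordination_bounded}(i) the operator $-A$ is sectorial, and since $A$ is injective the class of sectorial operators is stable under inversion, so $-A^{-1}$ is sectorial as well; moreover $\lambda\in\rho(A)$ with $\lambda\ne0$ forces $\lambda^{-1}\in\rho(A^{-1})$, whence $(0,\infty)\subset\rho(A)$ gives $(0,\infty)\subset\rho(A^{-1})$.

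The core of (i) is the absolute convergence of the integral in (\ref{IRF}) together with the bound. I would fix $t>0$, substitute $r=st$, and use the tempered estimate $\|R_{\alpha,\beta;A}(s)\|\le Ms^\beta$; the $dr$-integrand is then dominated by a constant multiple of $\|x\|\,t^{(\gamma-\beta-1)/2}\,J_{1+\beta+\gamma}(2\sqrt r)\,r^{-\delta}$ with $\delta=\frac{1+\gamma-\beta}{2}$. Here the hypothesis $\gamma>\beta+1/2$ is exactly equivalent to $\delta>3/4$, while $\delta\le(3+\beta+\gamma)/2$ holds trivially, so the integrability statement for Bessel functions recalled in Section 2 yields $J_{1+\beta+\gamma}(2\sqrt r)\,r^{-\delta}\in L^1(0,\infty)$. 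This single estimate establishes absolute convergence, and, after reinstating the prefactor $t^{(1+\beta+\gamma)/2}$ and collecting powers of $t$, the bound $\|R_{\alpha,\gamma;A^{-1}}(t)x\|\le M't^\gamma\|x\|$; strong continuity for $t>0$ follows by dominated convergence, and the value at $t=0$ is $0$ because $\gamma>0$, so (\ref{tempered}) holds on all of $[0,\infty)$. I expect this integrability step---identifying precisely why $\gamma>\beta+1/2$ is the sharp threshold---to be the main obstacle.

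With convergence in hand, the Laplace transform is a direct computation. Interchanging the order of integration (justified by the bound just proved) and applying (\ref{LT}) with order $1+\beta+\gamma$ to the inner $t$-integral produces the factor $s^{(1+\beta+\gamma)/2}\lambda^{-2-\beta-\gamma}e^{-s/\lambda}$, and after the cancellation of the powers of $s$ one is left with $\lambda^{-\gamma-1}x-\lambda^{-1-\gamma-\alpha}(\lambda^{-\alpha}-A)^{-1}x$, using (\ref{ris}) for $R_{\alpha,\beta;A}$ evaluated at $1/\lambda$. Finally the inversion identity for sectorial operators from Section 2, in the form $(\lambda^\alpha-A^{-1})^{-1}=\lambda^{-\alpha}\bigl(1-\lambda^{-\alpha}(\lambda^{-\alpha}-A)^{-1}\bigr)$, rewrites this as $\lambda^{\alpha-\gamma-1}(\lambda^\alpha-A^{-1})^{-1}x$, which is precisely (\ref{ris}) for $A^{-1}$ with exponent $\gamma$; this proves (i).

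For (ii) I would specialize (i) to $\gamma=1+\beta$, legitimate since $1+\beta>\beta+1/2$, obtaining the tempered $(\alpha,1+\beta)$-ROF $R_{\alpha,1+\beta;A^{-1}}$. This is the instance of Proposition \ref{mreg} with integration orders $1+\beta$ and $\beta$, so that $\eta=1$, $m=1$, and its condition (iii) collapses to exactly the stated hypothesis: $t\mapsto R_{\alpha,1+\beta;A^{-1}}(t)x$ is continuously differentiable with (tempered) derivative, the vanishing requirement $f(0)=0$ being automatic as $1+\beta>0$. Concretely, if $A^{-1}$ generates a tempered $(\alpha,\beta)$-ROF $R_{\alpha,\beta;A^{-1}}$, then $I_{0+}^{1}R_{\alpha,\beta;A^{-1}}$ is a tempered $(\alpha,1+\beta)$-ROF for $A^{-1}$, so by uniqueness of the Laplace transform $R_{\alpha,1+\beta;A^{-1}}(t)x=\int_0^t R_{\alpha,\beta;A^{-1}}(s)x\,ds$ is continuously differentiable with derivative $R_{\alpha,\beta;A^{-1}}(t)x$, which satisfies (\ref{tempered}). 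Conversely, if $R_{\alpha,1+\beta;A^{-1}}(\cdot)x$ is $C^1$ with tempered derivative $S(t)x$, then $R_{\alpha,1+\beta;A^{-1}}(t)x=\int_0^t S(s)x\,ds$ since $R_{\alpha,1+\beta;A^{-1}}(0)=0$, hence the Laplace transform of $S$ equals $\lambda\cdot\lambda^{\alpha-(1+\beta)-1}(\lambda^\alpha-A^{-1})^{-1}x=\lambda^{\alpha-\beta-1}(\lambda^\alpha-A^{-1})^{-1}x$; together with the tempered bound, the strong continuity of $S$, and $(0,\infty)\subset\rho(A^{-1})$ from (i), this exhibits $S$ as the tempered $(\alpha,\beta)$-ROF generated by $A^{-1}$ and yields the stated differentiation formula.
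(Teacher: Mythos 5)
Your proposal is correct and follows essentially the same route as the paper: the same change of variables $r=st$ reducing the bound to the integrability of $J_{1+\beta+\gamma}(2\sqrt r)\,r^{-(1+\gamma-\beta)/2}$ on $(0,\infty)$ (which is where $\gamma>\beta+1/2$ enters), the same Fubini--Laplace computation via (\ref{LT}) combined with the sectorial inversion identity, and the same reduction of (ii) to Proposition \ref{mreg} with $\eta=1$. You merely spell out details the paper leaves implicit (the resolvent inclusion $(0,\infty)\subset\rho(A^{-1})$ and the full argument for (ii)), which is fine.
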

   \proof (i) For all $x\in X$ and $t>0,$ as
   $$R_{\alpha,\gamma;A^{-1}}(t)x= \frac{t^\gamma}{\Gamma(\gamma+1)} x-t^{\beta+\gamma} \int_0^\infty  J_{1+\beta+\gamma}(2\sqrt{s}) s^{-\frac{1+\beta+\gamma}{2}}  R_{\alpha,\beta;A}(s/t)x \,ds,$$
   the operator function $R_{\alpha,\beta;A}$ is tempered, and the mapping
   $$r>0\mapsto J_{1+\beta+\gamma}(2\sqrt{r}) r^{-(1+\gamma-\beta)/2}$$
   is integrable in $(0,\infty),$ it follows that
   $$\|R_{\alpha,\gamma;A^{-1}}(t)x\|\leq M t^\gamma \|x\| .$$
   Hence  $R_{\alpha,\gamma;A^{-1}}(t)x$ is a continuous mapping of $t\geq 0$ with limit $0$ at $t=0$. Thus we can take the Laplace transform of the family. Thanks to Proposition \ref{Generation_bounded}, (\ref{LT}) and by applying Fubini's theorem, we obtain for $\text{Re}\lambda>0$ that
    \begin{equation*}
  \begin{aligned}
    &\int_0^\infty  e^{-t\lambda } R_{\alpha,\gamma;A^{-1}}(t)x \, dt\\
   &= \lambda^{-(1+\gamma)}x- \int_0^\infty R_{\alpha,\beta;A}(s)x \,s^{-\frac{1+\beta+\gamma}{2}} \int_0^\infty e^{-t \lambda } t^{\frac{1+\beta+\gamma}{2}} \,J_{1+\beta+\gamma}(2\sqrt{st})\, dt \, ds \\
   &=\lambda^{-(1+\gamma)}x- \lambda^{-(2+\beta+\gamma)} \int_0^\infty \text{e}^{-s/\lambda} R_{\alpha,\beta;A}(s)x  \, ds \\ \nonumber
   &= \lambda^{-(1+\gamma)}x-\lambda^{-(1+\gamma+\alpha)}(\lambda ^{-\alpha }-A)^{-1}x\\ \nonumber
   &= \lambda^{\alpha-\gamma-1}(\lambda ^{\alpha }-A^{-1})^{-1}x.
   \end{aligned}
\end{equation*}
   which shows our assertion. 

   (ii) It is a straightforward consequence of Proposition \ref{mreg}.

%
\endproof
   \begin{remark}
     For simplicity, consider Theorem \ref{MT} for $\beta=0.$ In order to improve the obtained result to $\gamma=1/2$ we should dealt with the function
     \begin{equation*}
       J_\frac{3}{2}(2\sqrt{s})s^{-3/4} = \frac{1}{2\sqrt{\pi}}\left( \sin(\sqrt{2 s})s^{-3/2}-2\cos(2\sqrt{s})s^{-1}\right ),
     \end{equation*}
     which does not belong to $L^1(1,\infty).$

     From the proof of (i) of Theorem \ref{MT} it is clear that the key is the behaviour of the function $R_{\alpha,\beta;A}$ at infinity. Assume that  $A$ is injective and it is the generator of an $(\alpha,\beta)$-ROF $R_{\alpha,\beta;A}$ such that
     \begin{equation}\label{tinf}
     \|R_{\alpha,\beta}(t)\|\leq M (1+t^{\delta}) \quad (t\geq 0),
     \end{equation}
     for some $\delta\geq0$, which is equivalent to say that $\|R_{\alpha,\beta}(t)\|=O(t^{\delta})$ as $t\rightarrow\infty.$ Then for all $\gamma\geq0,$ $\gamma>2\delta+1/2-\beta$, the inverse operator $A^{-1}$ is the generator of an $(\alpha,\gamma)$-ROF $R_{\alpha,\gamma;A^{-1}}$ given by (\ref{IRF}). In particular, if $2\delta+1/2-\beta<0,$ then $A^{-1}$ generates an $\alpha$-ROF. This result is a generalization of \cite[Theorem 4.13]{dL3}.

     As an interesting example of $(\alpha,\beta)$-ROF satisfying (\ref{tinf}), we recall that the Korteweg-De Vries operator $A=\frac{d^3}{dt^3}+\frac{d}{dt}$ on the spaces $X=L^p(\mathbb{R}),$ $1\leq p<\infty$, is injective and generates an $(1,\beta)$-ROF verifying (\ref{tinf}) for $\delta=\beta,$ for all $\beta>|\frac{1}{2}-\frac{1}{p}| $ (\cite{H}).

   \end{remark}

  From Theorem \ref{MT} we obtain easily a representation of the resolvent operator function generated by $A^{-1}$ in the case (i) of Theorem \ref{inv_generator}.

   \begin{corollary} Assume that $A$ generates a bounded analytic $\alpha$-ROF $R_{\alpha;A}$ and that $\overline{R(A)}=X$. Then $A^{-1}$ generates the bounded analytic $\alpha$-ROF $R_{\alpha;A^{-1}}$ given by
    \begin{equation}\label{rana}
     R_{\alpha;A^{-1}}(t)x = x+t^{-1}\int_0^\infty  J_2(2\sqrt{r}) \left( R_{\alpha;A}'(r/t)x -t r^{-1} R_{\alpha;A}(r/t)x\right ) \,dr \\
     \end{equation}
for $t>0,\, x\in X.$
\end{corollary}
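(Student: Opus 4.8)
The plan is to combine the abstract existence statement of Theorem \ref{inv_generator}(i) with the explicit subordination formula of Theorem \ref{MT}(i), and then to recover $R_{\alpha;A^{-1}}$ by a single differentiation. First I would observe that, since $A$ generates a bounded analytic $\alpha$-ROF, $A$ is sectorial (Lemma \ref{Generation_ba}(iii)) and has dense range, hence is injective because $\ker(A)\cap\overline{R(A)}=\{0\}$ for sectorial operators. Thus the hypotheses of both theorems are met. By Theorem \ref{inv_generator}(i), $A^{-1}$ again generates a bounded analytic $\alpha$-ROF; denote it $R_{\alpha;A^{-1}}\equiv R_{\alpha,0;A^{-1}}$. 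Its existence is therefore not in question, and the content of the corollary is the representation (\ref{rana}).

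Next I would note that boundedness of $R_{\alpha;A}$ means it is a \emph{tempered} $(\alpha,0)$-ROF (indeed $\|R_{\alpha;A}(t)\|\le M=Mt^{0}$, and $(0,\infty)\subset\Sigma_{\alpha\pi/2}\subset\rho(A)$), so Theorem \ref{MT}(i) applies with $\beta=0$ and $\gamma=1$; the admissibility condition $\gamma>\beta+1/2$ holds since $1>1/2$. This yields that $A^{-1}$ generates the tempered $(\alpha,1)$-ROF
\begin{equation*}
R_{\alpha,1;A^{-1}}(t)x=t\,x-t\int_0^\infty J_2(2\sqrt{st})\,s^{-1}R_{\alpha;A}(s)x\,ds .
\end{equation*}
On the other hand, by the elementary integration remark preceding Proposition \ref{mreg} together with uniqueness of the $(\alpha,1)$-ROF generated by $A^{-1}$, this same family equals $(I_{0+}^{1}R_{\alpha;A^{-1}})(t)x=\int_0^t R_{\alpha;A^{-1}}(s)x\,ds$. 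Hence $R_{\alpha;A^{-1}}(t)x=\frac{d}{dt}R_{\alpha,1;A^{-1}}(t)x$ by the fundamental theorem of calculus, the right-hand side being continuous in $t$.

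It remains to differentiate. After the change of variable $r=st$ the formula reads $R_{\alpha,1;A^{-1}}(t)x=t\,x-t\int_0^\infty J_2(2\sqrt r)\,r^{-1}R_{\alpha;A}(r/t)x\,dr$; applying the product rule together with $\frac{d}{dt}R_{\alpha;A}(r/t)=-\,r\,t^{-2}R_{\alpha;A}'(r/t)$ produces exactly (\ref{rana}). The main obstacle is the rigorous justification of differentiation under the integral sign and of the convergence of all integrals involved; here I would invoke Lemma \ref{pa}, which gives $\|R_{\alpha;A}'(r/t)\|\le L\,t\,r^{-1}$, together with boundedness of $R_{\alpha;A}$ and the Bessel asymptotics recalled in Section \ref{MS}, namely $J_2(2\sqrt r)=O(r)$ as $r\to 0^+$ and $J_2(2\sqrt r)=O(r^{-1/4})$ as $r\to\infty$. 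These bounds make the integrands $J_2(2\sqrt r)R_{\alpha;A}'(r/t)x$ and $J_2(2\sqrt r)r^{-1}R_{\alpha;A}(r/t)x$ locally uniformly dominated: near $0$ the factor $J_2(2\sqrt r)=O(r)$ cancels the $r^{-1}$ singularities, while at infinity both integrands are $O(r^{-5/4})$. The Leibniz rule then applies and the computation is valid for every $t>0$, giving (\ref{rana}).
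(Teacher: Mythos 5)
Your proposal is correct and follows essentially the same route as the paper: apply Theorem \ref{MT}(i) with $\beta=0$, $\gamma=1$ to obtain the $(\alpha,1)$-ROF generated by $A^{-1}$, identify it with $I_{0+}^{1}R_{\alpha;A^{-1}}$, and differentiate, using Lemma \ref{pa}(iii) together with the Bessel asymptotics to justify the Leibniz rule. The paper's proof is just a two-line sketch of exactly this argument, so your version supplies the details (injectivity via sectoriality, the dominated-convergence bounds) that the authors leave implicit.
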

  \begin{proof}
  By Theorem \ref{MT} we know that $A^{-1}$ generates an $(\alpha,1)$-ROF $R_{\alpha,1;A^{-1}}$ given by (\ref{IRF}) for $\beta=0$ and $\gamma=1$. From (iii) of Lemma \ref{pa} it is easy to show that the derivative of $R_{\alpha,1;A^{-1}}(t)x$ is given by (\ref{rana}).
  \end{proof}

  To finish the paper we apply the results of this section to some fractional Cauchy problems unsolved for the fractional derivative. Our motivation comes from the fractional version of the Barenblatt-Zheltov-Kochina equation
   \begin{equation*}
    \frac{\partial}{\partial t}(\lambda-\Delta)u(t)= \mu \Delta u(t),
  \end{equation*}
  which models the filtration of the fluid in fissured-porous medium (\cite{BZK,FNG}), and the Boussinesq  equation
  \begin{equation*}
    \frac{\partial^2}{\partial t^2}(\lambda-\Delta)u(t)= \mu \Delta u(t),
  \end{equation*}
  which models longitudinal waves in a thin elastic bar, where $\lambda, \mu>0$ and $\Delta$ is the Laplacian operator (\cite{Wh}). In both cases the equation can be written under the pattern of equation (\ref{e1}).

\begin{corollary}\label{Cp1}
Let $0<\gamma<\alpha\leq 2,$ $m=\lceil\gamma\rceil$ and $\beta>0.$ Let $B\in B(X)$ and $a>0.$ Suppose $A$ is the generator of a tempered $(\alpha,\beta)$-ROF and it has dense domain and range. The fractional Cauchy problem
\begin{eqnarray}\label{e1}
&\mathbf{D}_{t}^\gamma Au(t)= BAu(t)+a u(t), \;\; t>0,  \\
& (Au(t))^{(k)}(0)=u_k, 0\leq k \leq m-1, \label{e2}
\end{eqnarray}
has a unique strong solution, for all initial data $u_k \in R(A);$ that is, a function $u\in C([0,\infty);D(A))$ such that $w:=Au\in C^{m-1}([0,\infty);X))$, $I^{m-\gamma}_{0+}(w-\sum_{k=0}^{m-1}w^{(k)}(0)\,g_{k+1}) \in C^{m}([0,\infty);X)$ and (\ref{e1})-(\ref{e2}) holds.
\end{corollary}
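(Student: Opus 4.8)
The plan is to remove the operator standing in front of the fractional derivative by inverting $A$, reducing (\ref{e1})--(\ref{e2}) to a genuine abstract fractional Cauchy problem of Caputo type. First I would record that $A$ is injective: by Theorem~\ref{subordination_bounded}(i) we have $-A\in\text{Sect}(\pi-\alpha\pi/2)$, and for a sectorial operator $\ker(A)\cap\overline{R(A)}=\{0\}$, so the density of $R(A)$ forces injectivity and $A^{-1}$ is a well-defined closed operator with $D(A^{-1})=R(A)$. Putting $w:=Au$ (equivalently $u=A^{-1}w$), equation (\ref{e1}) turns into
\begin{equation*}
\mathbf{D}_{t}^{\gamma}w(t)=Bw(t)+aA^{-1}w(t)=(aA^{-1}+B)\,w(t),\qquad t>0,
\end{equation*}
with Caputo data $w^{(k)}(0)=u_{k}$, $0\le k\le m-1$. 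Writing $C:=aA^{-1}+B$, the domain $D(C)=R(A)$ is dense by hypothesis, so $C$ is densely defined, and the prescribed $u_{k}\in R(A)=D(C)$.

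The heart of the argument is to show that $C$ generates a $\gamma$-ROF. Since $0<\gamma<\alpha\le2$ and $A$ is a densely defined generator of a tempered $(\alpha,\beta)$-ROF with dense range, Theorem~\ref{inv_generator}(ii) yields $A^{-1}\in\mathcal{A}_{\gamma}(\theta_{\gamma})$; that is, $A^{-1}$ generates a \emph{bounded analytic} $\gamma$-ROF, which by Lemma~\ref{Generation_ba} amounts to the resolvent estimate $\|z(z-A^{-1})^{-1}\|\le M_{\theta}$ on a sector $\Sigma_{\gamma(\pi/2+\theta)}$. As this class is invariant under scaling by the positive constant $a$, also $aA^{-1}\in\mathcal{A}_{\gamma}(\theta_{\gamma})$. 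I would then add the bounded perturbation $B$: from $\|(z-aA^{-1})^{-1}\|\le M/|z|$ on the sector one gets $\|B(z-aA^{-1})^{-1}\|<1$ for $|z|$ large, so the Neumann series gives $(z-C)^{-1}=(z-aA^{-1})^{-1}\bigl(I-B(z-aA^{-1})^{-1}\bigr)^{-1}$ together with the resolvent bound needed to apply the (exponentially bounded form of the) generation criterion. Hence $C$ is a densely defined generator of a $\gamma$-ROF.

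Now Bajlekova's well-posedness theorem (\cite{B}) applies to $\mathbf{D}_{t}^{\gamma}w=Cw$ with $m=\lceil\gamma\rceil$ Caputo conditions: for data $u_{k}\in D(C)=R(A)$ there is a unique strong solution $w$ with $w\in C^{m-1}([0,\infty);X)$, $I_{0+}^{m-\gamma}\bigl(w-\sum_{k=0}^{m-1}w^{(k)}(0)g_{k+1}\bigr)\in C^{m}([0,\infty);X)$, and $\mathbf{D}_{t}^{\gamma}w=Cw$. To transfer this back set $u:=A^{-1}w$. Because the $\gamma$-ROF of $C$ leaves $D(C)=R(A)$ invariant and $C$ is closed, $w(t)\in R(A)$ for every $t$, whence $u(t)\in D(A)$ with $Au=w$; moreover $u=A^{-1}w=a^{-1}(Cw-Bw)=a^{-1}(\mathbf{D}_{t}^{\gamma}w-Bw)$ is continuous, so $u\in C([0,\infty);D(A))$. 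Then $w=Au$ inherits exactly the regularity stated, and $\mathbf{D}_{t}^{\gamma}Au=Cw=aA^{-1}(Au)+B(Au)=au+BAu$ recovers (\ref{e1}), while $w^{(k)}(0)=u_{k}$ gives (\ref{e2}). Uniqueness follows from uniqueness for the $w$-problem and the injectivity of $A$, which makes $u\mapsto Au$ a bijection between solution sets.

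The step I expect to be the main obstacle is the bounded perturbation, because for $\gamma\in(1,2)$ generators of $\gamma$-ROFs are \emph{not} in general stable under bounded perturbation (unlike cosine families). What saves the argument is precisely the analyticity of the $\gamma$-ROF generated by $aA^{-1}$, supplied by Theorem~\ref{inv_generator}(ii): it provides the sectorial resolvent decay that makes the Neumann-series estimate valid. A secondary point requiring care is the invariance $S_{\gamma}(t)R(A)\subset R(A)$ and the continuity of $t\mapsto A^{-1}w(t)$ behind $u\in C([0,\infty);D(A))$; both reduce to property~(ii) of the definition of a $\gamma$-ROF, i.e. the commutation of the family with its generator on $D(C)$.
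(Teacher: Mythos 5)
Your proposal is correct and follows essentially the same route as the paper: reduce via $w=Au$ to the Caputo problem for $C=B+aA^{-1}$, invoke Theorem \ref{inv_generator}(ii) to get that $aA^{-1}$ generates an analytic $\gamma$-ROF, perturb by the bounded operator $B$, apply Bajlekova's well-posedness theorem, and pull the solution back through $A^{-1}$. The only difference is that where you carry out the bounded-perturbation step by hand with a Neumann-series resolvent estimate (correctly identifying analyticity as the reason it works), the paper simply cites the additive bounded-perturbation theorem for analytic resolvent families (\cite[Theorem 2.25]{B}).
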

\begin{proof}
  Thanks to (ii) of Theorem \ref{inv_generator} and the additive perturbation by a bounded operator theorem (\cite[Theorem 2.25]{B}), we can assure that $B+a A^{-1}$ is the generator of an analytic $\gamma$-ROF for all $\gamma<\alpha$. Therefore, the fractional Cauchy problem
\begin{equation}\label{te}
\mathbf{D}_{t}^\gamma  v(t)= Bv(t)+a A^{-1}v(t), \; t> 0;   v^{(k)}(0)=v_k, 0\leq k \leq m-1,
\end{equation}
has a unique strong solution for all initial data $v_k \in R(A)$. Let $v(t)$ be the strong solution of (\ref{te}) for  $v_k=u_k, 0\leq k \leq m-1$. This means that $v\in C([0,\infty);R(A))$, $(B+a A^{-1})v\in C([0,\infty);X),$ $v\in C^{m-1}([0,\infty);X)),$ $I^{m-\gamma}_{0+}(v-\sum_{k=0}^{m-1}v^{(k)}(0)\,g_{k+1})\in C^{m}([0,\infty);X)$ and (\ref{te}) is satisfied. Then it is not hard to show that the continuous function $u(t)=A^{-1}v(t),$ $t\geq0,$ satisfies the required properties. Uniqueness of the strong solution of (\ref{e1})-(\ref{e2}) follows easily from uniqueness of the strong solution of (\ref{te}).
\end{proof}

\begin{corollary}\label{Cp2}
Let $\beta\geq 0,$ $\gamma>\beta+1/2,$ $m=\lceil\gamma\rceil$ and $a>0.$ Suppose $A$ is the generator of a tempered $(1,\beta)$-ROF and it is injective. Let $B\in B(X)$ such that $B(1-A)^{-1}=(1-A)^{-1}B.$ The Cauchy problem
\begin{eqnarray}\label{e3}
\frac{d}{dt}Au(t)= BAu(t)+a u(t), \;\; t> 0, u(0)=u_0,
\end{eqnarray}
has a unique strong solution, for all initial data $u_0 \in D(A)\cap R(A^{m});$ that is, a function $u\in C([0,\infty);D(A))$ such that $Au\in C^{1}([0,\infty);X))$  and (\ref{e3}) holds.
\end{corollary}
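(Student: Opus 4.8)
The plan is to follow the pattern of Corollary~\ref{Cp1}: reduce (\ref{e3}) to a first-order abstract Cauchy problem governed by $B+aA^{-1}$, solve that problem, and transfer the solution back through $A^{-1}$. The essential difference is that here $\alpha=1$ and the order $\gamma>\beta+1/2$ may exceed $1$, so I would invoke Theorem~\ref{MT} in place of Theorem~\ref{inv_generator}; the reduced problem will then be governed by the generator of a $(1,\gamma)$-ROF rather than of an analytic resolvent family. First I would set $v:=Au$. Since $u(t)\in D(A)$ this substitution is admissible, $u=A^{-1}v$, and (\ref{e3}) becomes
\begin{equation*}
v'(t)=(B+aA^{-1})v(t),\quad t>0,\qquad v(0)=Au_0 .
\end{equation*}

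By Theorem~\ref{MT}(i), since $A$ is injective and generates a tempered $(1,\beta)$-ROF with $\gamma>\beta+1/2$, the inverse $A^{-1}$ generates a tempered $(1,\gamma)$-ROF, and a routine rescaling shows that $aA^{-1}$ generates the tempered $(1,\gamma)$-ROF $t\mapsto a^{-\gamma}R_{1,\gamma;A^{-1}}(at)$. To adjoin the bounded operator $B$ I would exploit the commutativity hypothesis. Since $B(1-A)^{-1}=(1-A)^{-1}B$, and $\rho(A)\supset\Sigma_{\pi/2}$ is connected with analytic resolvent (Proposition~\ref{Generation_bounded}), a power-series and analytic-continuation argument propagates the commutation to $B(\lambda-A)^{-1}=(\lambda-A)^{-1}B$ for all $\lambda\in\rho(A)$. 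Through the inversion formula for the resolvents of $A^{-1}$ and the representation (\ref{IRF}) of $R_{1,\gamma;A^{-1}}$ in terms of the resolvent of $A$, this gives that $B$ commutes with the resolvent of $aA^{-1}$, with $A^{-1}$ on $R(A)$, and with the whole family $R_{1,\gamma;A^{-1}}(\cdot)$.

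This commutativity is precisely the hypothesis under which a bounded perturbation theorem applies to (tempered) integrated resolvent families, and I would use it to conclude that $G:=B+aA^{-1}$ again generates a tempered $(1,\gamma)$-ROF. Hence the reduced first-order problem is well-posed in the $\gamma$-times integrated sense and admits a unique strong solution for every $v_0$ in a suitable iterate $D(G^{\,k})$ of its domain. Because $B$ is bounded and commutes with the resolvents of $A^{-1}$, it leaves each $D(A^{-j})$ invariant, whence $D(G^{k})=D(A^{-k})=R(A^{k})$; and for $u_0\in D(A)\cap R(A^m)$, $m=\lceil\gamma\rceil$, the vector $v_0=Au_0$ lies in $R(A^{m+1})=D(G^{m+1})$, meeting this requirement. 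Writing $v$ for the resulting solution, the function $u:=A^{-1}v$ then satisfies $u\in C([0,\infty);D(A))$, $Au=v\in C^{1}([0,\infty);X)$, solves (\ref{e3}) after one differentiation, and has $u(0)=A^{-1}Au_0=u_0$; uniqueness for (\ref{e3}) follows from uniqueness for the reduced problem via the bijection $u\leftrightarrow Au$ between their solutions.

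I expect the perturbation step to be the main obstacle. For a genuinely $\gamma$-times integrated semigroup with $\gamma>1/2>0$ a bounded perturbation need not remain a generator, in sharp contrast with the $C_0$-semigroup case, and this is exactly why the commutativity $B(1-A)^{-1}=(1-A)^{-1}B$ is assumed. The two delicate points are therefore to verify rigorously that this commutation propagates to the entire family $R_{1,\gamma;A^{-1}}(\cdot)$ (so that a commuting bounded perturbation theorem truly applies), and to pin down the exact iterate of $D(G)$ in which $v_0=Au_0$ must lie for strong solvability of the reduced first-order problem.
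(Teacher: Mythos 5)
Your proposal is correct and follows essentially the same route as the paper: reduce to the first-order problem for $v=Au$, invoke Theorem \ref{MT}(i) so that $A^{-1}$ generates a tempered $(1,\gamma)$-ROF, apply the commuting bounded perturbation theorem for integrated semigroups to $B+aA^{-1}$ (the paper cites \cite[Theorem 1.3.5]{XL}), solve for $v_0=Au_0\in R(A^{m+1})$ via \cite[Theorem 3.2.13]{ABHN}, and transfer back through $A^{-1}$. The two ``delicate points'' you flag are exactly the ones the paper disposes of by these two citations.
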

\begin{proof} Since the class of integrated semigroups is closed under additive bounded perturbations which commutes with the generator (\cite[Theorem 1.3.5]{XL}), then by (i) of Theorem \ref{MT}
  we obtain that $B+a A^{-1}$ is the generator of a $(1,\gamma)$-ROF for all $\gamma>\beta+1/2$. Therefore, the Cauchy problem
\begin{equation}\label{te2}
\frac{d}{dt} v(t)= Bv(t)+a A^{-1}v(t), \; t> 0;   v(0)=v_0,
\end{equation}
has a unique strong solution for all initial data $v_0 \in R(A^{m+1})$ (\cite[Theorem 3.2.13]{ABHN}. Let $v(t)$ be the strong solution of (\ref{te2}) for  $v_0=Au_0;$ i.e., $v\in C([0,\infty);R(A))\cap C^{1}([0,\infty);X)$ and (\ref{te2}) is satisfied. Thus it is easy to show that the continuous function $u(t)=A^{-1}v(t),$ $t\geq0,$ satisfies the required properties. Uniqueness of the strong solution of (\ref{e3}) follows easily from uniqueness of the strong solution of (\ref{te2}).
\end{proof}

   Recall that, in general, additive perturbation by a bounded operator of the generator of an integrated semigroup can fail to generate an integrated semigroup, if the bounded perturbation does not commute with the generator (\cite[Example 3.7]{KH}). Highlight that the set of suitable initial data in Corollary \ref{Cp2} can be widen by using fractional powers of operators (\cite{NS}).

   Notice that Corollary \ref{Cp1} and Corollary \ref{Cp2} can be applied, for instance, to the differential operators introduced in Section \ref{ifrof}, including the Sch\"{o}dinger and the Laplacian operators. \\

\bigskip
\footnotesize
\noindent\textit{Acknowledgments.} This work was done while the second author was visiting the Department of Mathematics of Sichuan University, supported by Universitat de Val\`{e}ncia grant UV-INV\_EPDI17-545952. He would like to thank the functional analysis group, and very specially Professor Miao Li, for their warm hospitality and support. The first author was supported by the NSFC of China (No. 11371263) and NSFC-RFBR Programme of China (No. 11611530677). The third author was supported by the Russian Foundation for Basic Research, projects 15-01-00026\_a,16-01-00039\_a  and   17-51-53008.\\

\normalsize
\baselineskip=17pt

\end{document}